\newcommand{\Z}{\mathbb{Z}}
\newcommand{\z}{\mathcal{Z}}
\newcommand{\dom}{\mathtt{dom}}
\newcommand{\A}{\mathcal{A}}
\newcommand{\ca}{\mathscr{A}}
\newcommand{\N}{\mathbb{N}}
\newcommand{\bL}{\mathbb{L}}
\newcommand{\R}{\mathbb{R}}
\newcommand{\F}{\mathcal{F}}
\newcommand{\s}{\mathfrak{S}}
\newcommand{\M}{\mathcal{M}}
\newcommand{\G}{\mathscr{G}}
\newcommand{\La}{\mathcal{L}}
\theoremstyle{plain}
\newtheorem{thm}{Theorem}[section]
\newtheorem{prop}[thm]{Proposition}
\newtheorem{defin}[thm]{Definition}
\newtheorem{remark}[thm]{Remark}
\newenvironment{manualtheorem}[1]{%
  \manualtheoreminner
}{\endmanualtheoreminner}
\newenvironment{manualprop}[1]{%
  \manualpropinner
}{\endmanualpropinner}
\newenvironment{manuallem}[1]{%
  \manualleminner
}{\endmanualleminner}
\title[Dyson models: Transfer operators and Concentration inequalities]
{Concentration inequalities  and Transfer operators for supercritical Dyson models}
\author{Mirmukhsin Makhmudov}
\address{Mathematical Institute,  Leiden University, Einsteinweg 55, 2333 CC Leiden, The Netherlands}
\email{m.makhmudov@math.leidenuniv.nl}
\begin{document}

\begin{abstract}
    The present paper extends the results on the ferromagnetic Dyson models from \cites{EFMV2024, JOP2025} 
    to the near-critical and strongly interacting regimes.
    As part of our main result, we further establish Gaussian concentration bounds for the unique infinite-volume Gibbs measure throughout the entire supercritical regime. 
\end{abstract}

\maketitle

\section{Introduction}

The one-dimensional long-range Ising model, a.k.a. the Dyson model, is one of the archetypical models in Statistical Mechanics. 
This model has served as the testing ground for many conjectures and ideas in statistical mechanics, and it has also been the first example of a one-dimensional model which undergoes phase transitions at low temperatures. 
A Dyson interaction $\Phi=(\Phi_\Lambda)_{\Lambda\Subset\Z}$ is defined on the configuration space $X:=\{\pm 1\}^\Z$ of $\omega\in X$ by:
\begin{equation}\label{eq: Dyson interaction}
    \Phi_{\Lambda}(\omega):=\begin{cases} 
      -J(|i-j|)\omega_i\omega_j, & \text{ if }\Lambda=\{i,j\}\subset\Z, \ i\neq j;\\
      0, & \text{otherwise},
   \end{cases}
\end{equation}
where $J(k)\geq 0$ for every $k\in\N$ and to have well-defined infinite-volume Gibbs measures, we assume that $\sum_{k=1}^\infty J(k)<\infty$.
The most well-known example is the \textit{standard Dyson interaction} for which $J(k)=k^{-\alpha}$ for every $k\in\N$ and for some $\alpha>1$.
In this paper, instead of studying one fixed interaction $\Phi$, we study the family $\beta\Phi=(\beta\Phi_\Lambda)_{\Lambda\Subset\Z}$ of interactions parameterised by $\beta\geq 0$, which is interpreted as the inverse temperature in statistical mechanics.
A \textit{Gibbs measure} for the interaction $\beta\Phi$ is a probability measure $\mu$ on $X$ that satisfies the Dobrushin–Lanford–Ruelle (DLR) equations.
Equivalently, for finite volumes $\Lambda\Subset\Z$, the conditional distribution of $\mu$ given the configuration outside $\Lambda$ is prescribed by the Boltzmann-Gibbs ansatz:
\begin{equation}
    \mu(\omega_\Lambda \mid \omega_{\Lambda^c}) 
    =
    \gamma^{\beta\Phi}_\Lambda(\omega_\Lambda \mid \omega_{\Lambda^c}),\;\;\; \mu-\text{a.e. } \;\omega\in X,
\end{equation}
where 
\begin{equation}
    \gamma^{\beta\Phi}_\Lambda(\omega_\Lambda \mid \omega_{\Lambda^c})
    =
    \frac{\exp\left(-H_\Lambda^{\beta\Phi}(\omega)\right)}{\sum\limits_{\bar\omega_\Lambda\in \{\pm1\}^\Lambda} \exp\left(-H_\Lambda^{\beta\Phi}(\bar\omega_\Lambda \omega_{\Lambda^c})\right)},\;\;\; \;
    H_\Lambda^{\beta\Phi}=\sum_{\substack{V: V\Subset\Z\\ V\cap\Lambda\neq \emptyset}} \beta\Phi_V,
\end{equation}
and $\bar\omega_\Lambda\omega_{\Lambda^c}\in X$ denotes the configuration constructed by concatenating the strings $\bar\omega_\Lambda$ and $\omega_{\Lambda^c}$.
It is a well-known fact \cites{Dyson1969, FS1982} for the standard Dyson model that for every $\alpha\in (1,2]$ there exists a critical value $\beta_c(\alpha)\in(0,\infty)$ such that for every $\beta<\beta_c(\alpha)$, there exists a unique Gibbs measure for $\beta\Phi$ and for every $\beta>\beta_c(\alpha)$, there are multiple Gibbs measures for $\beta\Phi$.

We also consider the interaction $\Phi$ on the half-line $\Z_+$.
Studying the interaction $\Phi$ on ${\Z_+}$ amounts to studying the thermodynamic properties of the potential:
\begin{equation}
    \phi(x):=\sum_{n=1}^\infty J(n)x_0 x_n, \;\;\; x\in X_+:=\{\pm1\}^{\Z_+}.
\end{equation}
Note that $\phi$ and $\Phi$ are related through:
\begin{equation}
    \phi=-\sum_{0\in V\Subset \Z_+} \Phi_V,
\end{equation}
which often appears in the variational characterisation of the Gibbs measures.
In fact, by the Gibbs variational principle \cite{Georgii-book}*{Theorem 15.39}, the translation-invariant Gibbs measures for $\Phi$ -- that is, those preserved by the shift map $S$ -- are precisely the \textit{equilibrium states} for the potential $\phi$. These are the measures $\mu\in\M_{1,S}(X)$ satisfying 
\begin{equation}
    h(\mu)+\int_X \phi d\mu=\sup\Big\{h(\tau)+\int_X \phi d\tau: \tau\in\M_{1,S}(X)\Big\},
\end{equation}
where $h(\tau)$ is the measure-theoretic entropy of the measure $\tau$, the shift map $S:X\to X$ is defined by $S\omega=(\omega_{i+1})_{i}$ and $\M_{1,S}(X)$ denotes the set of the translation-invariant probability measures on $X$.

One of the main objectives in this paper is to study the Ruelle-Perron-Frobenius \textit{transfer operator} $\La_{\beta\phi}$ for the potential $\beta\phi$, which acts on the space of functions on $X_+$ by
\begin{equation}\label{eq: def of transfer operator}
    \La_{\beta\phi} f(x):=\sum_{a\in \{\pm1\}} e^{\beta\phi(ax)}f(ax), \;\; x\in X_+,
\end{equation}
here $f\in \R^{X_+}$ and $ax\in X_+$ denotes concatenated configuration.
First, we note that for the spectral radius $\rho(\La_{\beta\phi})$ of the operator $\La_{\beta\phi}$, one has $\rho(\La_{\beta\phi})=e^{P(\beta\phi)}$, where $P(\beta\phi)$ is the topological pressure of $\beta\phi$, which also coincides with the infinite-volume pressure of the interaction $\beta\Phi$.
Second, the adjoint operator $\La_{\beta\phi}^*$, which acts on the space of signed measures $\M(X_+)$, always has an eigenprobability $\nu\in\M_1(X_+)$ corresponding to the spectral radius $\lambda=\rho(\La_{\beta\phi})$, i.e., $\La_{\beta\phi}^*\nu=\lambda \nu$.
Throughout the paper, we also refer to the eigenprobabilities of $\La_{\beta\phi}^*$ as the one-sided Gibbs measures for $\beta\phi$ (or $\beta\Phi$) since they are exactly the probability measures satisfying the DLR equations corresponding to $\beta\phi$ (or $\beta\Phi|_{\Z_+}$) \cite{CLS2020}.
In the half-line $\Z_+$, a similar phase diagram to that of the interactions $\beta\Phi$ on $\Z$ is also established.
Indeed, in \cite{JOP2019}, it was proven for the standard Dyson model that there exists a critical temperature $\beta_c^+(\alpha)\in (0, \infty)$ such that for every $\beta\in [0,\beta^+_c(\alpha))$ 
there exists a unique eigenprobability of $\La_{\beta\phi}^*$ and for $\beta>\beta^+_c(\alpha)$ there are multiple eigenprobabilities. 
In \cite{JOP2019}, Johansson, \"Oberg and Pollicott also showed that for every $\alpha\in (1, 2]$, $\beta_c^+(\alpha)\leq 8 \beta_c(\alpha)$ and conjectured that $\beta^+_c(\alpha)=\beta_c(\alpha)$.
We note that part of this conjecture -- namely, that $\beta_c(\alpha) \leq \beta_c^+(\alpha)$ for every $\alpha \in (1, 2]$ -- follows directly from the Griffiths inequalities \cites{FV-book, Griffiths1966, Griffiths1968} (see Proposition \ref{prop: critical beta's for dyson and intermediate interactions}). The eigenprobabilities correspond, in a dual sense, to the principal eigenfunctions of the transfer operator $\La_{\beta\phi}$. While the existence of an eigenprobability of the adjoint operator $\La_{\beta\phi}^*$ follows directly from the DLR equations, establishing the existence of a principal eigenfunction for $\La_{\beta\phi}$ is considerably more subtle and technically involved.
Important advancements in this direction have been made in~\cites{EFMV2024, JOP2025, M2025}, whose relevant results — specialised here to the standard Dyson model — are summarised in the following theorem.
\begin{thm}\cites{EFMV2024, JOP2025}\label{EFMV2024 and JOP2025 results about Dyson model} Let $\Phi$ be the standard Dyson interaction (\ref{eq: Dyson interaction}).
Then
\begin{itemize}
    \item[(i)] for all $\alpha>1$ and for $\beta\geq 0$ \underline{sufficiently small}, the unique Gibbs state $\mu$ (in fact, the restriction to $X_+$) for $\beta\Phi$ is equivalent to the unique half-line Gibbs state $\nu$ for $\beta\phi$, i.e., $\mu\ll\nu$ and $\nu\ll\mu$.
    In particular, the Perron-Frobenius transfer operator $\La_{\beta\phi}$ has a non-negative integrable eigenfunction in $L^1(X_+,\nu)$;  
    
    \item[(ii)] if $\alpha>\frac{3}{2}$, then for all $0\leq\beta<\beta_c(\alpha)$, the Gibbs state $\mu$ for $\beta\Phi$ 
    is not only equivalent to the half-line Gibbs state $\nu$,  but there even exists 
    a continuous version of the Radon-Nikodym density $\frac{d\mu}{d\nu}$.
    Thus, the Perron-Frobenius transfer operator $\La_{\beta\phi}$ has a positive continuous eigenfunction. 
\end{itemize}
\end{thm}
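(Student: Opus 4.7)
The plan is to reduce both parts to proving an appropriate absolute continuity between $\mu|_{X_+}$ and $\nu$, and then extract the eigenfunction from the Radon--Nikodym derivative. The key abstract fact is a short cocycle computation: from the definition (\ref{eq: def of transfer operator}) one has $\La_{\beta\phi}(f\cdot g\circ S)=g\cdot\La_{\beta\phi}f$, so combining this with $\La_{\beta\phi}^*\nu=\lambda\nu$ and the translation-invariance of $\mu$ (which forces $\mu|_{X_+}$ to be invariant under the one-sided shift $S$), I would verify that, for every bounded measurable $g$,
\[
\int g\,(\La_{\beta\phi}h)\,d\nu=\lambda\int (g\circ S)\,h\,d\nu=\lambda\int g\,h\,d\nu,
\]
where $h:=d\mu|_{X_+}/d\nu$ whenever it exists. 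This forces $\La_{\beta\phi}h=\lambda h$ in $L^1(\nu)$, so the theorem reduces to establishing the absolute continuity $\mu|_{X_+}\sim\nu$ in each regime and upgrading its regularity in case (ii).

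For part (i), at sufficiently small $\beta$ the Dobrushin-type uniqueness criteria (applicable because $\sum_k J(k)<\infty$) yield uniqueness of both $\mu$ and $\nu$. To compare them on $X_+$, I fix $\Lambda\Subset\Z_+$ and note that the full-line and half-line finite-volume Hamiltonians differ only through the cross-interaction
\[
H^{\beta\Phi}_\Lambda(\omega)-H^{\beta\phi}_\Lambda(\omega)=-\beta\sum_{\substack{i<0\\ j\in\Lambda}} J(|i-j|)\,\omega_i\omega_j,
\]
which depends solely on the negative-side boundary $\omega_{\Z_-}$. A polymer/cluster-expansion estimate at small $\beta$ yields uniform exponential control on this quantity along typical boundary configurations; letting $\Lambda\uparrow\Z_+$ and invoking a martingale-convergence argument then produces a non-negative integrable density $h$ with $\int h\,d\nu=1$, and a symmetric estimate gives $\nu\ll\mu|_{X_+}$. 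Combined with the duality above, this delivers the $L^1(\nu)$ eigenfunction.

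For part (ii) the existence and uniqueness of $\mu$ and $\nu$ are known throughout the subcritical regime $\beta<\beta_c(\alpha)$, so the task is to upgrade the $L^1$-density $h$ to a continuous positive function when $\alpha>3/2$. The exponent $3/2$ is exactly the threshold at which bare first-moment control of the boundary field $j\mapsto\sum_{i<0}J(|i-j|)\omega_i$ fails (since $\sum_{j\geq 1} j^{1-\alpha}$ diverges for $\alpha\leq 2$) but second-moment/Gaussian-type fluctuation bounds succeed, because cancellations in the Dyson boundary configuration compensate for the slow decay. I would combine the probabilistic fluctuation bounds of \cites{JOP2025} with the subcritical correlation estimates of \cites{EFMV2024} to control the oscillation of the finite-volume densities over cylinders of shrinking diameter, producing a continuous version of $h$; strict positivity then follows from the lower bound inherent in the same estimates together with the full support of $\nu$, and the duality from the first paragraph yields the continuous positive eigenfunction.

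The principal obstacle is part (ii): in the range $3/2<\alpha\leq 2$ one cannot rely on absolute summation of the cross-interaction and must genuinely exploit probabilistic cancellations in the boundary field. Extending such cancellation estimates from the perturbative high-temperature regime of part (i) to the \emph{entire} subcritical phase is the technical heart of \cites{EFMV2024, JOP2025}. The long-range nature of the interaction couples fluctuations across the origin non-trivially, so correlation-decay estimates alone are insufficient; they must be combined with sharp second-moment control of the cross-term in order to pin down both continuity and positivity of $h$.
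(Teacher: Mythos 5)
Your opening reduction is sound and is exactly the mechanism the paper relies on (Proposition 3.1 of \cite{EFMV2024}): once $h=d\mu|_{X_+}/d\nu\in L^1(\nu)$ exists, the identity $\La_{\beta\phi}(h\cdot g\circ S)=g\cdot\La_{\beta\phi}h$, together with $\La_{\beta\phi}^*\nu=\lambda\nu$ and the $S$-invariance of $\mu|_{X_+}$, yields $\La_{\beta\phi}h=\lambda h$. The genuine gap is in how you produce $h$. For $1<\alpha\le 2$ the cross-interaction $W(\xi,\eta)=\beta\sum_{i\ge1}\sum_{j\ge0}J(i+j)\xi_{-i}\eta_j$ satisfies $\sum_{i,j}J(i+j)=\sum_{p\ge1}pJ(p)=\infty$, so $W$ is an \emph{unbounded} function of the configuration: no ``uniform exponential control'' of the Boltzmann correction is possible, and a bare martingale/weak-limit argument for the finite-volume densities only gives an a.e.\ limit, which need not be the density of the limiting measure (mass can escape to a singular part). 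What closes this step, both in \cite{EFMV2024} and in Section 4 here, is \emph{uniform integrability} of the intermediate densities $f^{(k)}=e^{-W_k}/\int e^{-W_k}d\nu^{(0)}$ with respect to the decoupled measure $\nu^{(0)}=\nu_-\times\nu$, checked via de la Vall\'ee Poussin; this in turn needs (a) Griffiths inequalities plus finiteness of the susceptibility to bound $\int W_k\,d\nu^{(k)}$, and (b) a Gaussian concentration bound for $\nu^{(0)}$ (exploiting the spin-flip symmetry of $\nu_-$, which makes $W_k(\cdot,\eta)$ centred) to bound $\int e^{-W_k}d\nu^{(0)}$ through $\lVert\underline{\delta}(W_k)\rVert_2^2=8\beta^2\sum_{i,j}J(i+j)^2<\infty$. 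A high-temperature cluster expansion does not substitute for these ingredients, since the object to be controlled is the exponential of an unbounded sum.

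For part (ii) you correctly locate the threshold $\alpha>3/2$ at $\sum_i\bigl(\sum_{j\ge i}J(j)\bigr)^2<\infty$, but again the concrete mechanism is missing. The actual route (Section 4.3, following Theorem C of \cite{EFMV2024}) is Arzel\`a--Ascoli applied to the half-line densities $f^{[N]}_+(\eta)\propto\int_{X_-}e^{-W_{[N]}(\xi,\eta)}\nu_-(d\xi)$: uniform two-sided bounds and equicontinuity both follow from the GCB and the associated moment concentration bounds for $\nu_-$, via the oscillation estimate $\lVert\underline{\delta}(W_{[N]}(\cdot,\eta)-W_{[N]}(\cdot,\tilde\eta))\rVert_2^2\le 16\beta^2\sum_i\bigl(\sum_{j\ge n+i}J(j)\bigr)^2\to0$ when $\eta,\tilde\eta$ agree on the first $n$ coordinates. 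The hard analytic input is therefore establishing the GCB for $\nu_-$ throughout the relevant temperature range (in this paper via the Bauerschmidt--Dagallier log-Sobolev inequality and the Herbst argument; in \cite{EFMV2024} via Dobrushin uniqueness, whence only small $\beta$); your proposal treats this concentration input as available from generic correlation estimates, which is precisely where the argument would stall.
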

\noindent
The main objective of this paper, in the context of transfer operators, is to extend the above result to the entire supercritical region, that is, to all pairs $(\alpha, \beta)$ with $\alpha \in (1,2]$ and $\beta \in [0, \beta_c(\alpha))$.
We, in fact, state the main result of this paper for interactions given by~\eqref{eq: Dyson interaction}, which are slightly more general than the standard Dyson interaction. 
A broader generalisation was considered in~\cite{EFMV2024}, where the authors studied a class of long-range interactions -- potentially including multi-body and non-ferromagnetic cases.
%-- which is more general than the ferromagnetic Dyson interactions considered in this paper.
However, the analysis in~\cite{EFMV2024} was restricted to sufficiently small values of $\beta\geq 0$, ensuring that the interaction $\Phi$ satisfies the Dobrushin uniqueness condition~\cite{Georgii-book}*{Chapter 8}.
This condition corresponds to the high-temperature regime and is known to be strictly contained within the uniqueness region.
\textit{Near-critical regimes}, i.e., when $0 \ll \beta < \beta_c(\alpha)$, are typically more delicate to analyse in statistical mechanics and often require different techniques. 
This phenomenon is also reflected in our setting. 
We also note that \cite{JOP2025} approached the problem using the random cluster representation -- a method fundamentally different from the techniques used in \cite{EFMV2024} -- and successfully treated both the high-temperatures $0 \leq \beta \ll \beta_c(\alpha)$ and the near-critical regimes $0 \ll \beta < \beta_c(\alpha)$, under the assumption that $\alpha > \frac{3}{2}$.
However, the methods in \cite{JOP2025} fundamentally depend on the square summability of the variations of the potential $\phi$, a condition that holds only for $\alpha > \frac{3}{2}$. 
Consequently, their approach does not extend to the strongly interacting setting $\alpha \leq \frac{3}{2}$.
In another related work \cite{M2025}, the principal eigenfunctions were studied in the presence of an external field in (\ref{eq: Dyson interaction}). While that work addresses a different setting -- since we focus on the case without external fields -- it is noteworthy that the presence of an external field, as shown in \cite{M2025}, tends to worsen the regularity of the eigenfunction.

Another objective of this paper is to study the concentration properties of the Gibbs measures for the near-critical Dyson models.
This has been done for the two-dimensional nearest-neighbour Ising model \cites{Moles-PhD-thesis,CMRU2020} using methods very specific for the short-range models and the two-dimensional lattice $\Z^2$.
The problem has been remaining open for the long-range models, and recently, Bauerschmidt and Dagallier \cite{BD2024} succeeded in obtaining the log-Sobolev inequalities (see Section \ref{section about LSI} for the detailed discussion) for the finite-volume Gibbs measures of ferromagnetic Ising models.
In this paper, we employ Bauerschmidt and Dagallier's result to establish the \textit{\textbf{Gaussian Concentration Bounds}} (GCB) for the unique \textit{infinite-volume Gibbs measure} of the interaction $\beta\Phi$ for all the values of $0\leq \beta<\beta_c(\alpha)$.
We note that the Gaussian Concentration Bound for a measure $\mu\in\M_1(X)$ (or $\mu\in\M_1(X_+)$) refers to a concentration inequality of the form
\begin{equation}\label{eq: def of GCB with tails}
        \mu\Big(\Big\{\omega\in X: \; F(\omega)-\int_{X} F d\mu\;\geq\; t\Big\}\Big)\;\leq\; e^{-\frac{2t^2}{D||\underline{\delta}(F)||_2^2}},\;\;\; t>0,
    \end{equation}
where $F$ is a real-valued continuous function on $X$, $D>0$ is a constant independent of $F$ and $t$, and $\lVert\underline{\delta}(F)\rVert^2_{2}$ denotes the total oscillation of $F$, i.e., $\lVert\underline{\delta}(F)\rVert^2_{2}:=\sum_{k\in\Z} (\delta_kF)^2$ and $\delta_kF:=\sup\bigl\{F(\xi)-F(\eta):\xi_j=\eta_j,\; j\in\Z\setminus\{k\}\bigr\}$.
It is a well-known fact (\cite{V2018}*{Proposition 2.5.2}) that up to the constant $D$, (\ref{eq: def of GCB with tails}) is equivalent to $F:X\to\R$ to be \textit{sub-Gaussian}, i.e., 
\begin{equation}\label{GCB ineq}
     \int_{X}e^{F-\int_{X}Fd\mu}d\mu\leq e^{D||\underline{\delta}(F)||^2_{2}}.
\end{equation}
or $\mu$ to have \textit{moment concentration bounds}, i.e., for all $m\in\N$, 
\begin{equation}\label{eq: MCB}
       \int_{X} \Big|F-\int_{X}Fd\mu\Big|^m d\mu
       \leq \Big(\frac{D||\underline \delta (F)||_2^2}{2} \Big)^{\frac{m}{2}} m \Gamma\Big(\frac{m}{2}\Big),
\end{equation}
where $\Gamma$ is Euler's gamma function.     

We now summarise the main results of this paper in the following theorem.
\begin{manualtheorem}{A}\label{main result}
Let $\Phi$ be the Dyson interaction given in (\ref{eq: Dyson interaction}), with couplings $J(k) \geq 0, \; k\in\N$ such that $\sum_{k=1}^\infty J(k) < \infty$.
\begin{itemize}
    \item[(i)] If $\sum_{i=1}^\infty\sum_{k=i}^\infty J(k)^2<\infty$, then for every $\beta < \beta_c(\Phi)$, the Gaussian Concentration Bound (GCB) holds for the unique Gibbs measure $\mu$ of the interaction $\beta\Phi$, and for the unique eigenprobability $\nu$ of the transfer operator $\La_{\beta\phi}$, here $\beta_c(\Phi)$ is the critical inverse temperature of the phase transitions for $\Phi$, i.e.,
    \[
    \beta_c(\Phi):=\sup\{\beta\geq 0: \beta\Phi \text{ has a unique Gibbs measure}\}.
    \]
        
    \item[(ii)]
    If 
    $J(k)=O(k^{-1})$, then for every $\beta < \beta_c(\Phi)$, the transfer operator $\La_{\beta\phi}$ admits a non-negative principal eigenfunction in $L^1(\nu)$.
    
    \item[(iii)] If $\sum_{i=1}^\infty \Big(\sum_{k=i}^\infty J(k)\Big)^2<\infty$, then for every $\beta < \beta_c(\Phi)$, the transfer operator $\La_{\beta\phi}$ admits a positive continuous principal eigenfunction.
    \end{itemize}
\end{manualtheorem}
\begin{remark}
    \begin{itemize}
        \item[(1)] The couplings in the standard Dyson interaction satisfy conditions (i) and (ii) of the theorem stated above for all values of $\alpha > 1$.
        However, condition in (iii) is satisfied if and only if $\alpha > \frac{3}{2}$. 
        In \cite{EFMV2024}, the authors conjectured that the principal eigenfunction exhibits essential discontinuities when $\alpha < \frac{3}{2}$. 
        We maintain this conjecture,
        but do not address it in the present work.    
        \item[(2)] We expect that the second part of Theorem \ref{main result} remains valid under the slightly weaker condition in part (i), namely that $\sum_{i=1}^\infty \sum_{k=i}^\infty J(k)^2 < \infty$. This is supported by a widely held expectation in statistical mechanics: that for long-range models, the two-point correlation functions decay at the same rate as the interaction energy throughout the entire supercritical region. 
        This decay behaviour can be rigorously established when $\beta$ is sufficiently small so that $\beta\Phi$ satisfies the Dobrushin uniqueness condition. In the near-critical regime, partial progress has been made; see \cites{SZ1999, A2021}.

        \item[(3)] 
        The condition $\sum_{i=1}^\infty \Big(\sum_{k=i}^\infty J(k)\Big)^2 < \infty$ also appears in \cite{JOP2025}, where, under this assumption, the authors establish the existence of a continuous eigenfunction using the random cluster representation of the Ising model—a method distinct from the one employed in the present work. 

       \item[(4)] It is natural to ask what occurs when $\beta \geq \beta_c(\alpha)$. In the phase coexistence region $\beta > \beta_c(\alpha)$, it is known from \cite{CMRU2020} that none of the extremal Gibbs states for $\beta\Phi$ satisfy the GCB.
       At criticality ($\beta = \beta_c(\alpha)$), ferromagnetic spin models often admit a unique Gibbs measure; however, this uniqueness does not imply that the GCB holds. For example, the two-dimensional nearest-neighbour Ising model \cite{CMRU2020} has a unique critical Gibbs measure, which does not satisfy the GCB.
       We expect an analogous phenomenon to occur for the Dyson model (\ref{eq: Dyson interaction}) at criticality.    \end{itemize}

\end{remark}

We prove Theorem \ref{main result} by employing the method of intermediate interactions, originally introduced in \cite{EFMV2024}.
In Section \ref{Sec: Intermediate interactions and their phase diagrams}, we construct those interactions and discuss their phase diagrams. 

In Section \ref{section about LSI}, we study the log-Sobolev inequalities for the Gibbs measures of the interaction $\Phi$ and also the intermediate interactions, by relying on a recent result by Bauerschmidt and Dagallier \cite{BD2024}. 
There, we prove Theorem \ref{thm: LSI for infnty volume Gibbs meas with uniform constant}, which establishes the log-Sobolev inequalities with a uniform log-Sobolev constant for a family of Gibbs measures including $\mu$ and $\nu$.
We then derive the GCB from these inequalities via the Herbst argument, thus proving part (i) of Theorem \ref{main result}.
As we have not found a full version of the Herbst argument suitable for our setting in the literature, we include a self-contained proof in the Appendix.

In Section \ref{Proof of the main result}, we prove the second and third parts of Theorem \ref{main result}.
We show the existence of an integrable eigenfunction by adapting the arguments from the proof of Theorem E in \cite{EFMV2024}. 
However, the proof in \cite{EFMV2024} relies on the Dobrushin uniqueness condition, which ensures strong mixing and decay of correlations but fails to hold near criticality.
To overcome this, we instead estimate the total sum of spin-spin correlations, using a result by Duminil-Copin and Tassion \cite{D-CT2016}, which serves our purpose just as well.
The proof of the continuity of the eigenfunction under slightly stronger condition $\sum_{i=1}^\infty\Big(\sum_{k=i}^\infty J(k)^2\Big)<\infty$ replicates the arguments previously used in the third part of Theorem C in \cite{EFMV2024}. 
For the sake of completeness, we include the main steps of the proof.

\section{Intermediate interactions and their phase diagrams}\label{Sec: Intermediate interactions and their phase diagrams}
\subsection{Intermediate interactions}
We start this section by recalling the construction of those intermediate interactions for the Dyson interaction $\Phi$ given by (\ref{eq: Dyson interaction}).
We will represent $\Z=-\mathbb N\cup \Z_+$.
Consider the countable collection of finite subsets of $\Z$ such that:
$$
\mathcal A=\{\Lambda\Subset \Z:\, \min(\Lambda)<0,\ \max \Lambda\ge 0\}.
$$
Index elements of $\mathcal A$ in an order $\mathcal A=\{\Lambda_1,\Lambda_2,\ldots\}$ so that for every $N\in\N$, there exists $k_N\in\N$ satisfying
\begin{equation}\label{indexation rule of mathcal{A}}
\sum_{i=1}^{k_N}\Phi_{\Lambda_i}
=
\sum_{\substack{
\min V<0\leq \max V\\
V\subset [-N,N]}} \Phi_V.       
\end{equation}
Then define
$$
{\Psi}^{(k)}_\Lambda = \begin{cases}  \Phi_{\Lambda},&\quad \Lambda\notin\{\Lambda_i: i\geq k+1\},\\
0, &\quad \Lambda\in\{\Lambda_i: i\geq k+1\},
\end{cases}\;
$$
In other words, we first remove all $\Phi_\Lambda$'s with $\Lambda\in\mathcal A$ from $\Phi$, and then add them one by one. 
Clearly, all the constructed interactions are UAC. 

\begin{remark}\label{convergence of intermediate interactions and specs }
    \begin{itemize}
        \item[1)] Every $\Psi^{(k)}$ is a local (finite) perturbation of $\Psi^{(0)}$, and $\Psi^{(k)}$ tend to $\Phi$
        as $k\to\infty$, in the sense that 
        $\Psi^{(k)}_{\Lambda}\rightrightarrows\Phi_{\Lambda}$ 
        for all $\Lambda\Subset\Z$.
        \item[2)] For specifications, it can also be concluded that for every $\beta\geq 0$ $\gamma^{\beta\Psi^{(k)}}$ converges to $\gamma^{\beta\Phi}$ as $k\to\infty$, i.e., for all $B\in\F$ and $V\Subset\Z$,
        $$
\gamma^{\beta\Psi^{(k)}}_V(B|\omega)\xrightarrow[k\to\infty] 
    {}\gamma^{\beta\Phi}_V(B|\omega)\; \text{ 
        uniformly on the b.c. } 
        \omega\in X.
        $$
        \item[3)] In addition, if $\nu^{(k)}$ is a Gibbs measure for $\beta\Psi^{(k)}$, then by Theorem 4.17 in \cite{Georgii-book}, any weak$^*$-limit point of the sequence $\{\nu^{(k)}\}_{k\geq 0}$ becomes a Gibbs measure for the potential $\beta\Phi$.

    \end{itemize}
\end{remark}

Another important observation is the following: since we have constructed $\beta\Psi^{(0)}$ from $\beta \Phi$ by removing all the interactions between the \textit{left} $-\mathbb N$ and the \textit{right} $\Z_+$ half-lines, the corresponding specification $\gamma^{\beta\Psi^{(0)}}$ becomes \textit{product type} \cite{Georgii-book}*{Example 7.18}. 
More precisely, $\gamma^{\beta\Psi^{(0)}}=\gamma^{\beta\Phi|_{-\N}}\times\gamma^{\beta\Phi|_{\Z_+}}$, where $\Phi|_{-\N}$ and $\Phi|_{\Z_+}$ are the restrictions of $\Phi$ to the half-lines $-\N$ and $\Z_+$, respectively.
Thus we have the following for the extremal Gibbs measures \cite{Georgii-book}*{Example 7.18}:

\begin{equation}\label{extreme Gibbs measures for Psi^0}
    \text{ex}\;\G(\gamma^{\beta\Psi^{(0)}})
    =\{\nu^l\times\nu^r: \nu^l\in \text{ex}\;\G(\gamma^{\beta\Phi|_{-\N}}),\;\; \nu^r\in \text{ex}\;\G(\gamma^{\beta\Phi|_{\Z_+}}) \}
\end{equation}
For each $\Psi^{(k)}$, we define the critical inverse temperature of phase transitions by 
\begin{equation*}
    \beta_c(\Psi^{(k)}):=\sup\{\beta\geq 0: \beta\Psi^{(k)} \text{ has a unique Gibbs measure}\}.
\end{equation*}
By employing the Griffiths inequalities \cites{FV-book, Griffiths1968, Griffiths1966}, one can establish the proposition below regarding the phase diagrams of the intermediate interactions $\Psi^{(k)}$ associated with the Dyson interaction $\Phi$.
\begin{prop}\label{prop: critical beta's for dyson and intermediate interactions}
For every $k\in\Z_+$,
    \begin{equation}\label{critical beta's for Phi and intermed interac}
        \beta_c(\Psi^{(k)})\geq \beta_c(\Psi^{(k+1)})\geq \beta_c(\Phi).
    \end{equation}
\end{prop}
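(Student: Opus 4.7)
The plan is to leverage the classical GKS (Griffiths II) inequality together with the FKG-based uniqueness criterion for ferromagnetic, spin-flip-symmetric Ising systems. By construction, $\Psi^{(k+1)}$ differs from $\Psi^{(k)}$ only through the inclusion of one additional non-negative pair coupling $\Phi_{\Lambda_{k+1}}=-J(|i-j|)\omega_i\omega_j$, and $\Phi$ differs from $\Psi^{(k+1)}$ only through further such non-negative pair couplings. I therefore have a coupling-wise monotone family $\Psi^{(k)}\preceq \Psi^{(k+1)}\preceq \Phi$ of ferromagnetic pair interactions, each of which is spin-flip symmetric.

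For each $\Psi$ in this family and each $\beta\geq 0$, let $\mu^{+,\beta\Psi}$ and $\mu^{-,\beta\Psi}$ denote the infinite-volume plus- and minus-boundary Gibbs states, obtained as FKG-monotone weak limits. Two standard inputs then suffice. First, the GKS-II inequality provides the site-wise monotonicity of single-spin expectations under addition of non-negative pair couplings,
$$
0\;\leq\; \langle\omega_i\rangle^{+,\beta\Psi^{(k)}}\;\leq\; \langle\omega_i\rangle^{+,\beta\Psi^{(k+1)}}\;\leq\; \langle\omega_i\rangle^{+,\beta\Phi},\qquad i\in\Z.
$$
Second, spin-flip symmetry gives $\langle\omega_i\rangle^{-,\beta\Psi}=-\langle\omega_i\rangle^{+,\beta\Psi}$, and an application of Strassen's theorem to the FKG-ordered pair $(\mu^{-,\beta\Psi},\mu^{+,\beta\Psi})$ yields the criterion
$$
\mu^{+,\beta\Psi}=\mu^{-,\beta\Psi}\;\Longleftrightarrow\; \langle\omega_i\rangle^{+,\beta\Psi}=0\ \text{for all}\ i\in\Z,
$$
the left-hand side being equivalent to uniqueness of the Gibbs measure for $\beta\Psi$ (because $\mu^{+,\beta\Psi}$ and $\mu^{-,\beta\Psi}$ stochastically dominate, respectively are dominated by, every DLR measure).

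With these in hand, the conclusion is immediate. For any $\beta<\beta_c(\Phi)$, uniqueness for $\beta\Phi$ yields $\langle\omega_i\rangle^{+,\beta\Phi}=0$ for every $i$; the GKS bound then forces $\langle\omega_i\rangle^{+,\beta\Psi^{(k+1)}}=0$ for every $i$, so the criterion delivers uniqueness for $\beta\Psi^{(k+1)}$, and therefore $\beta\leq\beta_c(\Psi^{(k+1)})$. Taking $\beta\nearrow\beta_c(\Phi)$ gives $\beta_c(\Phi)\leq\beta_c(\Psi^{(k+1)})$. The same argument applied to the pair $(\Psi^{(k)},\Psi^{(k+1)})$ yields $\beta_c(\Psi^{(k+1)})\leq\beta_c(\Psi^{(k)})$.

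The main subtle point — and the principal conceptual obstacle — is that the intermediate interactions $\Psi^{(k)}$ are \emph{not} translation invariant, so one cannot reduce uniqueness to the single-site condition $\langle\omega_0\rangle^{+}=0$ at the origin; rather, $\langle\omega_i\rangle^{+,\beta\Psi^{(k+1)}}=0$ must be verified at \emph{every} $i\in\Z$. Fortunately, this is exactly what the site-by-site GKS comparison with the translation-invariant $\Phi$ supplies, so no genuinely new difficulty appears.
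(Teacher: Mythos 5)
Your proof is correct and follows exactly the route the paper indicates (the paper records no written proof, only the remark that the proposition follows from the Griffiths inequalities): coupling-wise monotonicity $\Psi^{(k)}\preceq\Psi^{(k+1)}\preceq\Phi$, GKS-II monotonicity of the plus-state magnetizations $\langle\omega_i\rangle^{+}$, and the standard FKG/Strassen criterion that uniqueness for a spin-flip-symmetric ferromagnet is equivalent to $\langle\omega_i\rangle^{+}=0$ at every site. Your remark that the non-translation-invariance of $\Psi^{(k)}$ forces the site-by-site verification is apt and is correctly handled by the site-wise GKS comparison.
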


\section{Log Sobolev inequalities for the infinite-volume Gibbs measures}\label{section about LSI}
In this section, we discuss log-Sobolev inequalities for the unique infinite-volume Gibbs measures $\mu^{\beta}$ and $\nu^{(k),\beta}$ associated with the interactions $\beta\Phi$ and $\beta\Psi^{(k)}$, respectively, for $k \in \Z_+$, in the regime $\beta < \beta_c(\Phi)$.
To emphasise the dependence on the inverse temperature, we include $\beta$ in the superscripts throughout this section.

Consider a subset (finite or infinite) $L\subseteq \Z$ and denote the corresponding configuration space by $X_L:=\{\pm 1\}^L$.
For a configuration $\omega\in X_L$, $\omega^{(i)}\in X_L$ denotes the configuration obtained from $\omega$ by flipping the spin at site $i\in\bL$ and keeping all other spins unchanged, i.e.,
    $$
        \omega^{(i)}_j := \begin{cases}
            \omega_j, & i \neq j;\\
            -\omega_i, & i = j.
        \end{cases}
    $$
\begin{defin}
    A measure $\tau$ on $X_L$ satisfies the \textit{\textbf{log-Sobolev inequality}} (LSI) if there exists a constant $D = D(\tau) > 0$ such that, for every local function $f : X_L\to \R$,
    \begin{equation}\label{eq: LSI in gen form}
        \texttt{Ent}_{\tau}(f^2)\leq 2D \int_{X_L} \sum_{i\in L} \; (f(\omega)-f(\omega^{(i)}))^2\tau(d\omega),
    \end{equation}
    where for a non-negative local function $\tilde f : X_L \to \R$, the entropy of $\tilde f$ with respect to $\tau$ is defined as
    $
    \texttt{Ent}_\tau(\tilde f) := \int_{X_L} \tilde f \log \tilde f \, d\tau 
    - \int_{X_L} \tilde f \, d\tau \cdot \log \int_{X_L} \tilde f \, d\tau.
    $
    The smallest such constant $D$ is called the \textit{log-Sobolev constant} for $\tau$.
\end{defin}

Before stating the main theorem of this section, we note that, thanks to Theorem 2.1 in \cite{D-CT2016}, the infinite-volume susceptibility of the Dyson model $\beta\Phi$, defined as
    \begin{equation}
        \chi_{\beta}(\Phi):=\sum_{i\in\Z}\mu^\beta(\sigma_0\sigma_i),
    \end{equation} 
is finite for every $0\leq \beta<\beta_c(\Phi)$, here $\mu^\beta$ is the unique infinite-volume Gibbs measure for $\beta\Phi$.

\begin{thm}\label{thm: LSI for infnty volume Gibbs meas with uniform constant}
    Assume $\beta<\beta_c(\Phi)$ and let $\nu^{(k), \beta}$ be the unique infinite-volume Gibbs measures for $\beta\Psi^{(k)}$. 
    Then $\mu^\beta$ and $\nu^{(k), \beta}$ satisfy the log-Sobolev inequality with a constant not larger than $\frac{1}{4}+\frac{\beta}{2}\exp{(2\beta \chi_{\beta}(\Phi))}$. 
\end{thm}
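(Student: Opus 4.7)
The plan is to combine the finite-volume log-Sobolev inequality of Bauerschmidt--Dagallier~\cite{BD2024} for ferromagnetic Ising models with Griffiths/FKG monotonicity and the finiteness of the infinite-volume susceptibility below criticality provided by Duminil-Copin--Tassion~\cite{D-CT2016}, and then to pass the bound to infinite volume using uniqueness of the Gibbs state in the regime $\beta<\beta_c(\Phi)$.

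\textbf{Step 1 (Finite-volume LSI).} For $\Lambda\Subset\Z$ and an arbitrary boundary condition $\eta\in X$, the conditional measure $\gamma^{\beta\Phi}_\Lambda(\cdot\mid\eta)$ is the Gibbs measure of a finite ferromagnetic Ising model on $\Lambda$ with couplings $\beta J(|i-j|)$ and inhomogeneous external field $h_i(\eta)=\beta\sum_{j\notin\Lambda}J(|i-j|)\eta_j$. I would apply the main LSI theorem of~\cite{BD2024} to this measure: it delivers an inequality of the form~(\ref{eq: LSI in gen form}) with constant bounded by $\tfrac{1}{4}+\tfrac{\beta}{2}\exp(2\beta\,\chi^{\eta}_{\Lambda,\beta})$, where $\chi^{\eta}_{\Lambda,\beta}$ is the finite-volume susceptibility (the maximum over $i\in\Lambda$ of $\sum_{j\in\Lambda}\mathrm{Cov}_{\gamma^{\beta\Phi}_\Lambda(\cdot\mid\eta)}(\sigma_i,\sigma_j)$).

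\textbf{Step 2 (Uniform bound on the susceptibility).} The next step is to show $\chi^{\eta}_{\Lambda,\beta}\leq \chi_\beta(\Phi)$ uniformly in $\Lambda$ and $\eta$. By FKG the truncated two-point function $\mathrm{Cov}_{\gamma^{\beta\Phi}_\Lambda(\cdot\mid\eta)}(\sigma_i,\sigma_j)$ is nonnegative, and Griffiths' second inequality together with the monotone convergence of finite-volume $\pm$-states to $\mu^\beta$ gives a domination by the infinite-volume truncated two-point function. Since $\beta<\beta_c(\Phi)$ implies vanishing spontaneous magnetisation ($\mu^\beta(\sigma_i)=0$), the infinite-volume truncated function reduces to $\mu^\beta(\sigma_0\sigma_{|i-j|})$, and summing over $j$ gives the claimed bound. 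Finally, $\chi_\beta(\Phi)<\infty$ for $\beta<\beta_c(\Phi)$ by~\cite{D-CT2016}*{Theorem 2.1}.

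\textbf{Step 3 (Infinite-volume limit).} Because $\mu^\beta$ is the unique Gibbs measure for $\beta\Phi$, we have $\gamma^{\beta\Phi}_{\Lambda_n}(\cdot\mid\eta)\to\mu^\beta$ in the weak-$*$ topology along any sequence $\Lambda_n\uparrow\Z$ and any fixed $\eta$. For a local function $f$ depending only on coordinates in $V\Subset\Z$, both sides of~(\ref{eq: LSI in gen form}) are continuous along this convergence: the right-hand side is a finite sum (over $i\in V$) of integrals of bounded continuous functions in $\omega$, and since $f$ is local and bounded, $\texttt{Ent}_{\tau}(f^2)$ is also continuous in $\tau$ in weak-$*$ topology. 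Consequently the uniform bound from Steps~1--2 descends to $\mu^\beta$, yielding the LSI with constant at most $\tfrac{1}{4}+\tfrac{\beta}{2}\exp(2\beta\,\chi_\beta(\Phi))$. Density of local functions in the natural domain of the LSI extends the inequality to all local $f$, as required.

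\textbf{Step 4 (Intermediate interactions).} The same strategy applies to each $\nu^{(k),\beta}$. Proposition~\ref{prop: critical beta's for dyson and intermediate interactions} gives $\beta_c(\Phi)\leq \beta_c(\Psi^{(k)})$, so $\nu^{(k),\beta}$ is unique when $\beta<\beta_c(\Phi)$. Moreover, since $\Psi^{(k)}$ is obtained from $\Phi$ by deleting a (ferromagnetic) subset of couplings, Griffiths' second inequality bounds the truncated two-point functions of $\gamma^{\beta\Psi^{(k)}}_\Lambda(\cdot\mid\eta)$ by those of $\gamma^{\beta\Phi}_\Lambda(\cdot\mid\eta)$. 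Combined with Step~2, this yields the same uniform bound on the log-Sobolev constant of $\nu^{(k),\beta}$.

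\textbf{Main obstacle.} The subtle step is~Step~2: controlling the \emph{finite-volume} truncated two-point function, uniformly over arbitrary boundary conditions $\eta\in X$, by the \emph{infinite-volume} susceptibility. The ferromagnetic and translation-invariant structure make this tractable via FKG and Griffiths' inequalities together with the vanishing of $\mu^\beta(\sigma_i)$ in the uniqueness regime; however, care is needed to ensure that the classical monotone convergence arguments available for $\pm$ boundary conditions also bound the correlations under the mixed $\eta\in X$ appearing in the DLR equations.
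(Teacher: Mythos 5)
Your overall architecture (finite-volume LSI from \cite{BD2024}, a uniform susceptibility bound via correlation inequalities and \cite{D-CT2016}, then a weak-$*$ passage to the unique infinite-volume measure) matches the paper, and your Steps 3--4 limit arguments are essentially the ones used there. However, Step 2 contains a genuine gap. By conditioning on an arbitrary boundary condition $\eta\in X$ you introduce an inhomogeneous external field $h_i(\eta)$ of \emph{mixed sign}, and you then need to control the finite-volume \emph{truncated} two-point function $\mathrm{Cov}_{\gamma^{\beta\Phi}_\Lambda(\cdot\mid\eta)}(\sigma_i,\sigma_j)$. Griffiths' second inequality gives monotonicity in the couplings of the \emph{untruncated} correlations $\langle\sigma_A\rangle$, and only for nonnegative fields; it does not yield monotonicity of covariances, and truncated correlations are in general not monotone in the volume or the couplings. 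So the asserted domination of $\chi^\eta_{\Lambda,\beta}$ by $\chi_\beta(\Phi)$ does not follow from FKG plus Griffiths as written --- this is precisely the difficulty that the correlation inequality of \cite{DSS2023} was designed to address, and your ``Main obstacle'' paragraph correctly identifies the weak point without resolving it. The same issue recurs in Step 4, where you again compare truncated correlations of $\gamma^{\beta\Psi^{(k)}}_\Lambda(\cdot\mid\eta)$ and $\gamma^{\beta\Phi}_\Lambda(\cdot\mid\eta)$ via Griffiths.

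The paper avoids this entirely by applying Theorem \ref{thm: BD result} to the finite-volume measures with \emph{free} boundary conditions. There the external field vanishes, the magnetisation is zero by spin-flip symmetry, so the covariance coincides with the plain two-point function $\tau^\beta_\Lambda(\sigma_i\sigma_j)$, and Griffiths' second inequality immediately gives $\nu^{(k),\beta}_\Lambda(\sigma_i\sigma_j)\le\mu^\beta_\Lambda(\sigma_i\sigma_j)\le\mu^\beta(\sigma_i\sigma_j)$, hence the uniform bound by $\chi_\beta(\Phi)$; uniqueness of the Gibbs state then lets one take the weak-$*$ limit of the free-boundary measures exactly as in your Step 3. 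To repair your argument you should either switch to free boundary conditions, or supply a genuine bound on truncated correlations in the presence of a mixed-sign field (e.g.\ via \cite{DSS2023}). A secondary, purely technical omission: the version of the Bauerschmidt--Dagallier theorem quoted in the paper requires the coupling matrix to be positive definite with spectral radius at most $1$ (conditions (C1)--(C3)), so one must first shift by a multiple of the identity and rescale $\beta$ accordingly before invoking it; your proposal skips this normalisation.
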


To prove Theorem \ref{thm: LSI for infnty volume Gibbs meas with uniform constant}, we rely on the main result in \cite{BD2024},
which is built upon a recently developed correlation inequality in \cite{DSS2023}.
In \cite{BD2024}, Bauerschmidt and Dagallier established log-Sobolev inequalities for finite-volume Gibbs measures of the ferromagnetic Ising model with free boundary conditions.
The interactions considered in \cite{BD2024} are more general than the Dyson interaction (\ref{eq: Dyson interaction}).

Below, we recall the main result of \cite{BD2024}.

Fix a finite volume $\Lambda\Subset\Z$ and let  $\ca^{(\Lambda)}$ be an arbitrary $\Lambda\times\Lambda$ real matrix satisfying the following conditions:
\begin{itemize}
    \item[(C1)] $\ca^{(\Lambda)}$ is a symmetric matrix with $\ca^{(\Lambda)}_{ij}\leq 0$ for all off-diagonal elements $i,j\in\Lambda$;
    \item[(C2)] $\ca^{(\Lambda)}$ is a positive definite matrix, i.e., $(u_\Lambda, \ca^{(\Lambda)}u_\Lambda)> 0$ for all $u_\Lambda\in\R^\Lambda$ with $u_\Lambda\neq 0$;
    \item[(C3)] for the spectral radius $\rho(\ca^{(\Lambda)})$ of $\ca^{(\Lambda)}$, one has $\rho(\ca^{(\Lambda)})\leq 1$.
\end{itemize}
One can associate a Gibbs measure with the matrix $\ca^{(\Lambda)}$ on the configuration space $X_\Lambda:=\{\pm1\}^\Lambda$ as
\begin{equation}\label{Boltzmann ansatz for matrix A}
    \tau_\Lambda^{\beta}(\{\omega_\Lambda\}) = \frac{e^{-\frac \beta 2 (\omega_\Lambda, \ca^{(\Lambda)} \omega_\Lambda)}}
    {\sum_{\omega_\Lambda \in X_\Lambda} e^{-\frac \beta 2 (\omega_\Lambda, \ca^{(\Lambda)} \omega_\Lambda)}},\;\; \omega_\Lambda \in X_\Lambda,
\end{equation}
where $\beta\geq 0$ represents the inverse temperature.
\begin{thm}\cite{BD2024}*{Theorem 1.1}\label{thm: BD result} Assume $\ca=\ca{(\Lambda)}$ is a real $\Lambda\times\Lambda$ matrix satisfying (C1)-(C3) conditions. 
Then for all $\beta\geq 0$, the finite-volume Gibbs measure $\tau^\beta_\Lambda$ satisfies the log-Sobolev inequality (\ref{eq: LSI in gen form}) with a constant $D^\beta_\Lambda(\ca)>0$ such that
\begin{equation}\label{eq: BD's raw result}
    D^\beta_\Lambda(\ca)\leq \frac{1}{4}+\frac{1}{2}\int_0^\beta \exp{\Big(2\int_0^t\chi_{s}^{(\Lambda)}(\ca) ds \Big)}dt,
\end{equation}
where for $\beta\geq 0$, $\chi_\beta(\ca)$ denotes the susceptibility of the measure $\tau^{\beta}_\Lambda$, i.e., 
\begin{equation}
    \chi_\beta(\ca):=\sup_{j\in\Lambda}\sum_{i\in\Lambda}\tau^\beta_\Lambda(\sigma_i\sigma_j).
\end{equation}

\end{thm}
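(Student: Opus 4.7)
The plan is to apply Bauerschmidt--Dagallier's Theorem~\ref{thm: BD result} to finite-volume approximants of $\mu^\beta$ and $\nu^{(k),\beta}$, show that the resulting LSI constants admit a bound \emph{independent} of the volume, and then pass to the infinite-volume limit. The key ingredient for volume-uniformity is that the finite-volume susceptibilities appearing inside BD's estimate~\eqref{eq: BD's raw result} can be dominated by the infinite-volume susceptibility $\chi_\beta(\Phi)$ of $\beta\Phi$, which is finite for $\beta<\beta_c(\Phi)$ by the Duminil-Copin--Tassion sharpness result invoked just above the theorem.

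For each $\Lambda\Subset\Z$, let $\mu^\beta_\Lambda$ denote the finite-volume Gibbs measure for $\beta\Phi$ with free boundary conditions and set $\mathscr{A}^{(\Lambda)}_{ij}:=-J(|i-j|)$ for $i\neq j$, with diagonal entries $\mathscr{A}^{(\Lambda)}_{ii}:=c$ chosen large enough that Gershgorin's circle theorem secures positive definiteness; since $\sup_i\sum_{j\neq i}J(|i-j|)\leq 2\sum_{k\geq 1}J(k)<\infty$, the constant $c$ can be taken uniformly in $\Lambda$. With this choice, $\mu^\beta_\Lambda$ is proportional to $\exp\bigl(-\tfrac{\beta}{2}(\omega_\Lambda,\mathscr{A}^{(\Lambda)}\omega_\Lambda)\bigr)$, the factor $e^{-\beta c|\Lambda|/2}$ being absorbed into the normalisation; after a further, $\Lambda$-independent rescaling that secures $\rho(\mathscr{A}^{(\Lambda)})\leq 1$, all three conditions (C1)--(C3) of Theorem~\ref{thm: BD result} hold uniformly in $\Lambda$, and the theorem gives a finite-volume LSI with constant bounded by~\eqref{eq: BD's raw result}.

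Next, I dominate the susceptibilities. By the second Griffiths inequality, $\chi^{(\Lambda)}_s(\mathscr{A})\leq \chi_s(\Phi)$ for every $s\in[0,\beta]$ and every $\Lambda\Subset\Z$, since restricting the couplings to $\Lambda$ amounts to removing non-negative couplings $J(|i-j|)$ with $\{i,j\}\not\subset\Lambda$, and removing ferromagnetic couplings only decreases two-point correlations. A further application of Griffiths~II shows that $s\mapsto\chi_s(\Phi)$ is non-decreasing on $[0,\beta_c(\Phi))$, because the derivative in $s$ produces a sum of non-negative truncated four-point correlations. Hence $\int_0^t\chi_s(\Phi)\,ds\leq \beta\chi_\beta(\Phi)$ for every $t\in[0,\beta]$, and plugging this into~\eqref{eq: BD's raw result} yields
\[
D^\beta_\Lambda(\mathscr{A})\leq \tfrac{1}{4}+\tfrac{\beta}{2}\exp\bigl(2\beta\chi_\beta(\Phi)\bigr),
\]
uniformly in $\Lambda$.

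Finally, since $\beta<\beta_c(\Phi)$ and the Gibbs measure is unique, $\mu^\beta_\Lambda\xrightarrow{w^*}\mu^\beta$ as $\Lambda\uparrow\Z$; both sides of the LSI, evaluated on a fixed cylinder function $f$, depend continuously on the measure once $\Lambda$ is large enough, so the finite-volume LSI passes to the limit and yields the claimed bound for $\mu^\beta$, extending by density to all admissible $f$. For each intermediate interaction $\beta\Psi^{(k)}$ the argument works \emph{verbatim}: the matrix $\mathscr{A}^{(k),(\Lambda)}$ has the same structure as $\mathscr{A}^{(\Lambda)}$ but with some off-diagonal entries set to zero, conditions (C1)--(C3) are therefore inherited, and Griffiths~II still gives $\chi^{(\Lambda)}_s(\mathscr{A}^{(k),(\Lambda)})\leq \chi_s(\Phi)$, which is all that is needed. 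The main technical subtlety I anticipate lies in the verification of (C3), $\rho(\mathscr{A}^{(\Lambda)})\leq 1$: a uniform rescaling of $\mathscr{A}^{(\Lambda)}$ is required in general, which in turn rescales the effective inverse temperature inside~\eqref{eq: BD's raw result}, so one must check that this scaling can be absorbed (for instance by a judicious choice of the diagonal) in such a way that the final constant features the physical $\beta$ rather than a rescaled version.
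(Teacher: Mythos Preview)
Your proposal does not prove the stated theorem at all. Theorem~\ref{thm: BD result} is Bauerschmidt--Dagallier's result: it asserts that a \emph{finite-volume} Gibbs measure $\tau^\beta_\Lambda$ associated to a matrix $\ca$ satisfying (C1)--(C3) obeys an LSI with constant bounded by~\eqref{eq: BD's raw result}. Your proposal takes this theorem as a \emph{black box} and uses it to derive an LSI for the \emph{infinite-volume} measures $\mu^\beta$ and $\nu^{(k),\beta}$. That is a proof of Theorem~\ref{thm: LSI for infnty volume Gibbs meas with uniform constant}, not of Theorem~\ref{thm: BD result}; invoking Theorem~\ref{thm: BD result} to establish Theorem~\ref{thm: BD result} would be circular.

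In the paper, Theorem~\ref{thm: BD result} is not proved either: it is quoted verbatim from \cite{BD2024}*{Theorem~1.1}, whose proof relies on the Polchinski flow and the correlation inequality of Ding--Song--Sun \cite{DSS2023}. None of this machinery appears in your proposal.

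If your intention was actually to prove Theorem~\ref{thm: LSI for infnty volume Gibbs meas with uniform constant}, then your outline is essentially the paper's own argument: define finite-volume coupling matrices, shift the diagonal and rescale to enforce (C1)--(C3), apply Theorem~\ref{thm: BD result}, dominate the finite-volume susceptibilities by $\chi_\beta(\Phi)$ via Griffiths~II, and pass to the weak$^*$ limit. The paper makes the normalisation step explicit---it sets $\tilde\ca:=\tfrac{1}{2\kappa}(\ca+\kappa I)$ with $\kappa=2\sum_{k\geq 1}J(k)$, so that the Gibbs measure for $\beta\tilde\ca$ is exactly the free-boundary Gibbs measure for $\tfrac{\beta}{2\kappa}\Phi$, and then rescales $\tfrac{\beta}{2\kappa}\to\beta$ at the end---resolving precisely the ``technical subtlety'' you flag in your last paragraph.
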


By the second Griffiths inequality, 
for $0\leq\beta'\leq \beta''$ and every $A\subset\Lambda$, one has $\tau_\Lambda^{\beta''}(\sigma_A)\geq \tau_\Lambda^{\beta'}(\sigma_A)$, thus it follows that the susceptibility $\chi_\beta(\ca)$ is a non-decreasing function of $\beta$, i.e., for all $\beta'\leq \beta''$, $\chi_{\beta'}(\ca)\leq \chi_{\beta''}(\ca)$.
Thus one obtains for the log-Sobolev constant $D^\beta_\Lambda(\ca)$ of $\tau_\Lambda^\beta$ from (\ref{eq: BD's raw result}) that
\begin{equation}\label{BD-LSI with better constant}
    D^\beta_\Lambda(\ca)\leq \frac{1}{4}+\frac{\beta}{2} e^{2\beta \chi_\beta(\ca)}.
\end{equation}
\begin{proof}[Proof of Theorem \ref{thm: LSI for infnty volume Gibbs meas with uniform constant}]
Fix a finite volume $\Lambda\Subset\Z$.
Note that the coupling constants of a Dyson interaction $\Phi$ (\ref{eq: Dyson interaction}) form a matrix $\ca^{(\infty)}(\Lambda)$ which is given by 
\begin{equation}
    \ca^{(\infty)}(\Lambda)_{ij}:=\begin{cases}
         -J(|i-j|),\; & i\neq j, \; i,j\in\Lambda,\\
         0, & i=j.
    \end{cases}
\end{equation} 
Similarly, the coupling matrix $\ca^{(k)}(\Lambda)$ of an intermediate interaction $\Psi^{(k)}$ such that $\Lambda_i\subset\Lambda$, $i=\overline{1,k}$ is given by
\begin{equation}
    \ca^{(k)}(\Lambda)_{ij}:=\begin{cases}
        -J(|i-j|),\; & i\neq j,\; i,j\in\Lambda,\; \{i,j\}\in \A^{(k)};\\
        0,\; & \text{otherwise},
    \end{cases}
\end{equation}
where 
$
\A^{(k)}
:=
\{\{i,j\}\subset\Z: i,j\geq 0 \text{ or } i,j<0 \}
\cup 
\{\Lambda_1,\Lambda_2, \dots \Lambda_k\}.
$
Note that
\begin{equation}
    \sup_{j\in\Lambda}\sum_{i\in\Lambda}|\ca^{(k)}(\Lambda)_{ij}|
    \leq
    \sup_{j\in\Lambda}\sum_{i\in\Lambda}|\ca^{(\infty)}(\Lambda)_{ij}|
    =
    \sup_{j\in\Lambda}\sum_{i\in\Lambda\setminus\{j\}}J(|i-j|)
    <
    2\sum_{i=1}^\infty J(i)<\infty.
\end{equation}
Thus, by Schur's theorem, $l^2$ norms of the matrices $\ca^{(\infty)}(\Lambda)$ and $\ca^{(k)}(\Lambda)$ are bounded by $\kappa:=2\sum_{i=1}^\infty J(i)$.
Consequently, since $\ca^{(\infty)}(\Lambda)$ and $\ca^{(k)}(\Lambda)$ are symmetric matrices, $\ca^{(\infty)}(\Lambda)+\kappa I_{\Lambda\times\Lambda}$ and $\ca^{(k)}(\Lambda)+\kappa I_{\Lambda\times\Lambda}$ are positive definite, where $ I_{\Lambda\times\Lambda}$ is the $\Lambda\times\Lambda$ identity matrix.
Then, one can verify that for every $k\geq 0$, the matrices 
\begin{equation}
    \tilde \ca^{(\infty)}(\Lambda):=\frac{1}{2\kappa}\Big(\ca^{(\infty)}(\Lambda)+\kappa I_{\Lambda\times\Lambda}\Big)
\end{equation}

\begin{equation}
    \tilde \ca^{(k)}(\Lambda):=\frac{1}{2\kappa}\Big(\ca^{(k)}(\Lambda)+\kappa I_{\Lambda\times\Lambda}\Big)
\end{equation}
satisfy the conditions (C1)-(C3) in Theorem \ref{thm: BD result}.
Since $\frac{1}{2} I_{\Lambda\times\Lambda}$ participates both in the numerator and the denominator in the Boltzmann ansatz (\ref{Boltzmann ansatz for matrix A}), it cancels out. 
Therefore, for every $\beta\geq 0$, the matrices $\beta\tilde\ca^{(\infty)}(\Lambda)$ and $\beta\tilde \ca^{(k)}(\Lambda)$ produce, via (\ref{Boltzmann ansatz for matrix A}), the finite-volume Gibbs measures $\mu_\Lambda^{\beta/2\kappa}$ and $\nu_\Lambda^{(k), \beta/2\kappa}$ of the interactions $\frac{\beta}{2\kappa}\Phi$ and $\frac{\beta}{2\kappa}\Psi^{(k)}$ with the free boundary conditions, respectively.
By the second Griffiths inequality, for every $\beta<2\kappa\beta_c(\Phi)$ and $A\subset\Lambda$ one has that 
\begin{equation}
    \nu_\Lambda^{(k), \beta/2\kappa}(\sigma_A)
    \leq
    \mu_\Lambda^{\beta/2\kappa}(\sigma_A)
    \leq
    \mu^{\beta/2\kappa}(\sigma_A),
\end{equation}
where $\mu^{\beta/2\kappa}$ is the unique infinite-volume Gibbs measure for $\frac{\beta}{2\kappa}\Phi$.
Thus
\begin{eqnarray}\label{universal bound for finite vol suscep}
    \chi_{\beta/2\kappa}(\ca^{(k)}(\Lambda))
    \leq
    \chi_{\beta/2\kappa}(\ca^{(\infty)}(\Lambda))
    \leq
    \chi_{\beta/2\kappa}(\Phi)
    :=
    \sum_{i\in\Z}\mu^{\beta/2\kappa}(\sigma_0\sigma_i).
\end{eqnarray}
Consider a local function $f:X\to\R$ such that $\dom(f)\subset\Lambda$, where $\dom(f):=\{i\in\Z: \delta_i f\neq 0\}$.
Then Theorem \ref{thm: BD result} together with (\ref{BD-LSI with better constant}) and (\ref{universal bound for finite vol suscep}) yields,
\begin{equation}
    \texttt{Ent}_{\mu_\Lambda^{\beta/2\kappa}}(f^2)
    \leq 2D\int_{X_\Lambda}\sum_{i\in\Lambda} (f(\omega)-f(\omega^{(i)}))^2 d\mu^{\beta/2\kappa}_\Lambda,
\end{equation}
where $D:=\frac{1}{4}+\frac{\beta}{4\kappa}\exp{(\frac{\beta}{\kappa} \chi_{\beta/2\kappa}(\Phi))}$,
which is finite, by Theorem 2.1 in \cite{D-CT2016}, if $\beta<2\kappa\beta_c(\Phi)$. 
Thus since $\dom(f)\subset\Lambda$,
\begin{multline}\label{LSI for mu_Lambda with uniform constant}
    \int_{X} f^2 \log f^2 \, d(\mu^{\beta/2\kappa}_\Lambda\times \rho_{0}^{\Lambda^c}) 
    - 
    \int_{X} f^2 \, d(\mu^{\beta/2\kappa}_\Lambda\times \rho_{0}^{\Lambda^c}) \cdot \log \int_{X} f^2 \, d(\mu^{\beta/2\kappa}_\Lambda\times \rho_{0}^{\Lambda^c})\\
    \leq
    2D\int_{X}\sum_{i\in\Z} (f(\omega)-f(\omega^{(i)}))^2 d(\mu^{\beta/2\kappa}_\Lambda\times \rho_{0}^{\Lambda^c}),
\end{multline}
where $\rho$ is the uniform Bernoulli measure on $E=\{\pm1\}$, i.e., $\rho_0:=\frac{1}{2}\delta_{-1}+\frac{1}{2}\delta_{1}$.
Theorem \ref{thm: BD result} along with (\ref{BD-LSI with better constant})  and (\ref{universal bound for finite vol suscep}) also implies the log-Sobolev inequalities for the measures $\nu_\Lambda^{(k), \beta/2\kappa}$ with the universal constant $D:=\frac{1}{4}+\frac{\beta}{4\kappa}\exp{(\frac{\beta}{\kappa} \chi_{\beta/2\kappa}(\Phi))}$:
\begin{multline}\label{LSI for nu^(k)_Lambda with uniform constant}
    \int_{X} f^2 \log f^2 \, d(\nu_\Lambda^{(k), \beta/2\kappa}\times \rho_{0}^{\Lambda^c}) 
    - 
    \int_{X} f^2 \, d(\nu_\Lambda^{(k), \beta/2\kappa}\times \rho_{0}^{\Lambda^c}) \cdot \log \int_{X} f^2 \, d(\nu_\Lambda^{(k), \beta/2\kappa}\times \rho_{0}^{\Lambda^c})\\
    \leq
    2D\int_{X}\sum_{i\in\Z} (f(\omega)-f(\omega^{(i)}))^2 d(\nu_\Lambda^{(k), \beta/2\kappa}\times \rho_{0}^{\Lambda^c}).
\end{multline}
Note that for every $\beta<2\kappa\beta_c(\Phi)$, since there exists a unique
Gibbs measure $\mu^{\beta/2\kappa}$ for the interaction $\Phi$, the finite-volume measures $\mu^{\beta/2\kappa}_\Lambda\times \rho_{0}^{\Lambda^c}$ converge to $\mu^{\beta/2\kappa}$ as $\Lambda\uparrow\Z$ in the weak$^*$ topology. 
Analogously, by Proposition \ref{prop: critical beta's for dyson and intermediate interactions}, for each $\beta<2\kappa\beta_c(\Phi)$, the finite-volume measures $\nu_\Lambda^{(k), \beta/2\kappa}\times \rho_{0}^{\Lambda^c}$ converge to the unique infinite-volume Gibbs measure $\nu^{(k), \beta/2\kappa}\in\G(\Psi^{(k)})$ as $\Lambda\uparrow\Z$.
Hence, by taking limit as $\Lambda\uparrow\Z$ in (\ref{LSI for mu_Lambda with uniform constant}) and (\ref{LSI for nu^(k)_Lambda with uniform constant}), one obtains that for every $\beta<2\kappa\beta_c(\Phi)$ the measures $\mu^{\beta/2\kappa}$ and $\nu^{(k), \beta/2\kappa}$, $k\geq 0$, satisfy the log-Sobolev inequality with a uniform constant $D=\frac{1}{4}+\frac{\beta}{4\kappa}\exp{(\frac{\beta}{\kappa} \chi_{\beta/2\kappa}(\Phi))}$.
Thus, after rescaling $\frac{\beta}{2\kappa}\to \beta$, one concludes the statement of Theorem \ref{thm: LSI for infnty volume Gibbs meas with uniform constant}, i.e., for $\tilde D:=\frac{1}{4}+\frac{\beta}{2}\exp{(2\beta \chi_{\beta}(\Phi))}$ and for every $\beta<\beta_c(\Phi)$, $k\in\Z_+$ and for all local functions $f: X\to\R$,
\begin{equation}
    \int\limits_{X} f^2 \log f^2 \, d\mu^{\beta} 
    - 
    \int\limits_{X} f^2 \, d\mu^{\beta} \cdot \log \int\limits_{X} f^2 \, d\mu^{\beta}
    \leq
    2\tilde D\int\limits_{X}\sum_{i\in\Z} (f(\omega)-f(\omega^{(i)}))^2 d\mu^{\beta},
\end{equation}
and 
\begin{equation}
    \int\limits_{X} f^2 \log f^2 \, d\nu^{(k), \beta} 
    - 
    \int\limits_{X} f^2 \, d\nu^{(k), \beta} \cdot \log \int\limits_{X} f^2 \, d\nu^{(k), \beta}
    \leq
    2\tilde D\int\limits_{X}\sum_{i\in\Z} (f(\omega)-f(\omega^{(i)}))^2 d\nu^{(k), \beta}.
\end{equation}

\end{proof}

\section{Proof of Theorem \ref{main result}}\label{Proof of the main result}

\subsection{The first part}
By Theorem \ref{thm: LSI for infnty volume Gibbs meas with uniform constant} and Proposition \ref{prop: GCB from LSI}, the Gaussian Concentration Bounds hold for the unique Gibbs measure $\mu$ of $\beta\Phi$, as well as for the Gibbs measures $\nu^{(k)}$ corresponding to the intermediate interactions $\beta\Psi^{(k)}$, for all $k \in \Z_+$ and every $\beta \in [0, \beta_c(\Phi))$, with a uniform GCB constant given by
\begin{equation}\label{The Universal GCB constant}
   D := \frac{1}{8}\Big(1 + 2\beta\exp(2\beta \chi_{\beta}(\Phi))\Big)
    \cdot
    \Big(e^{4\beta\sum_{k=1}^\infty J(k)} + 1\Big).
\end{equation}
Since $\nu^{(0)} = \nu_- \times \nu$, where $\nu_-$ denotes the unique Gibbs measure for the restricted interaction $\beta\Phi|_{-\N}$, the measure $\nu$ inherits the GCB from $\nu^{(0)}$.

\subsection{The second part}
The proof that we provide below is similar to the proof of Theorem E in \cite{EFMV2024}.
The main difference between the proofs is that in \cite{EFMV2024}, the spin-spin correlations are estimated using the Dobrushin interdependence matrix. 
Since the Dobrushin interdependence matrix is not available in the near-critical regime, we instead treat the total sum of spin-spin correlations, which equally serves well for our purpose, using the result by Duminil-Copin and Tassion \cite{D-CT2016}.

In the light of Proposition 3.1 in \cite{EFMV2024}, it suffices to show that $\mu|_{\Z_+}\ll \nu$, which, in turn, follows from $\mu\ll \nu^{(0)}$.
Fix $\beta\in [0,\beta_c(\Phi))$.
Then since $\beta_c(\Phi)\leq \beta_c(\Psi^{(k)})$, for every $k\geq 0$, the interaction $\beta\Psi^{(k)}$ has a unique Gibbs measure $\nu^{(k)}$.
Since each $\beta\Psi^{(k)}$ is a finite perturbation of $\beta\Psi^{(0)}$ (\cite{Georgii-book}*{Chapter 7}), $\nu^{(k)}$ is absolutely continous with respect to $\nu^{(0)}$ with a density 
\begin{equation}\label{density of nu^(k) wrt nu^(0)}
    \frac{d\nu^{(k)}}{d\nu^{(0)}}
    =
    \frac{e^{-W_k}}{\int_X e^{-W_k} d\nu^{(0)}},
\end{equation}
where $W_k:=\sum_{\iota=1}^k\beta\Phi_{\Lambda_\iota}$.
We denote the right hand side of (\ref{density of nu^(k) wrt nu^(0)}) by $f^{(k)}$.
It follows from Theorem D in \cite{EFMV2024} that if the family $\{f^{(k)}: k\in\N\}\subset L^1(X, \nu^{(0)})$ is uniformly integrable, then $\mu$ is absoluetly continuous with respect to $\nu^{(0)}$. 
To verify uniform integrability, we apply De la Vallée Poussin's theorem \cite{V-P1915}, which asserts that if 
\begin{equation}\label{place to apply Valle-Poissen}
    \sup_{k\geq 0} \int_{X}f^{(k)}\log f^{(k)}d\nu^{(0)}<\infty,\;
\end{equation}
then the family $\{f^{(k)}: k\in\N\}$ is uniformly integrable in $L^1(X,\nu^{(0)})$.
Note that
\begin{equation}\label{the sum}
    \int_{X}f^{(k)}\log f^{(k)}d\nu^{(0)}
    =
    -
    \int_{X}W_kd\nu^{(k)}
    -
    \log \int_{X}e^{-W_k}d\nu^{(0)},
\end{equation} 
hence $\{f^{(k)}: k\in\N\}$ is uniformly integrable if each term in the right-hand side of (\ref{the sum}) is bounded uniformly in $k$. 
One can check using the Griffiths inequalities that
\begin{eqnarray}
    \Big\lvert \int_X W_k d\nu^{(k)} \Big\rvert
    &=&\notag
    \sum_{\iota=1}^k\int_X -\beta\Phi_{\Lambda_\iota} d\nu^{(k)}\\
    &\leq&\notag
    \sum_{\iota=1}^k \int_X -\beta \Phi_{\Lambda_\iota}d\mu\\
    &\leq&\notag
    \sum_{i=1}^\infty\sum_{j=0}^{\infty}
    \int_{X} \beta J(i+j)\sigma_{-i}\sigma_j d\mu\\
    &=&\label{bounding integrals of W_k using GKS}
    \sum_{p=1}^\infty \beta J(p) \mu(\sigma_0\sigma_p)\cdot \# U_p,
\end{eqnarray}
where $U_p:=\{(i,j)\in \N\times \Z_+: i+j=p\}$ and in the last equation, we used the translation-invariance of the measure $\mu$.
Since $\# U_p=p$ and by assumption $J(p)=O(p^{-1})$, it follows from (\ref{bounding integrals of W_k using GKS}) that 
\begin{equation}\label{bounding integrals of W_k with susceptibility}
    \Big\lvert \int_X W_k d\nu^{(k)} \Big\rvert
    \leq 
    \sum_{p=1}^\infty \beta p J(p) \mu(\sigma_0 \sigma_p) 
    \leq 
    \beta\cdot \sup_{p\in\N} pJ(p)\cdot\sum_{p=1}^\infty  \mu(\sigma_0 \sigma_p)
    =
    \beta\cdot \sup_{p\in\N} pJ(p)\cdot\chi_\beta(\Phi).
\end{equation}
As $\chi_\beta(\Phi)<\infty$ for every $0\leq \beta<\beta_c(\Phi)$ \cite{D-CT2016}*{Theorem 2.1}, one obtains from (\ref{bounding integrals of W_k with susceptibility}) that $\sup_{k\in\N} \Big\lvert \int_X W_kd\nu^{(k)} \Big\rvert <\infty$. 

\noindent
We now show that the second term on the right-hand side of (\ref{the sum}) is bounded uniformly in $k\in\N$, more precisely, we prove that $\sup_{k\in\N}\int_{X}e^{-W_k}d\nu^{(0)}<\infty$.
By telescoping (\ref{density of nu^(k) wrt nu^(0)}), one can argue that (see also (49) in \cite{EFMV2024} and (36) in \cite{M2025})
\begin{equation}
    \int_X e^{-\beta(\Phi_{\Lambda_k}+\Phi_{\Lambda_{k-1}}+\dots+\Phi_{\Lambda_1})}d\nu^{(0)}
    =
    \prod_{\iota=1}^k\int_X e^{-\beta\Phi_{\Lambda_\iota}} d\nu^{(\iota-1)}.
\end{equation}
Thus by applying the GCB to the measure $\nu^{(\iota-1)}$ and the local function $e^{-\beta\Phi_{\Lambda_\iota}}$,
\begin{equation}\label{eq: bounding exp term in valle poussin}
    \int_X e^{-W_k}d\nu^{(0)}
    \leq
    e^{D\sum_{\iota=1}^k \lVert\underline{\delta}(\beta\Phi_{\Lambda_\iota}) \rVert_2^2}
    \cdot
    e^{\beta\sum_{\iota=1}^k\int_X-\Phi_{\Lambda_\iota}d\nu^{(\iota-1)}},
\end{equation}
where $D$ is given by (\ref{The Universal GCB constant}).
Note that 
\begin{equation}\label{int eigen bound on variations}
    \sum_{\iota=1}^k \lVert\underline{\delta}(\beta\Phi_{\Lambda_\iota}) \rVert_2^2
    \leq
    \sum_{\iota=1}^\infty \lVert\underline{\delta}(\beta\Phi_{\Lambda_\iota}) \rVert_2^2
    =
    \beta^2\sum_{i=1}^\infty\sum_{j=0}^\infty \lVert\underline{\delta}(\Phi_{\{-i,j\}}) \rVert_2^2.
\end{equation}
Since
\begin{equation*}
    \delta_s(\Phi_{\{-i,j\}})=\begin{cases} 
      0, & s\in \Z\setminus\{-i,j\}; \\
      2J(i+j), & s\in\{-i,j\},
     \end{cases}
\end{equation*}
one has
\begin{equation}
    \lVert\underline{\delta}(\Phi_{\{-i,j\}}) \rVert_2^2
    =
    \sum_{s\in\Z} (\delta_s(\Phi_{\{-i,j\}}))^2
    =
    8 J(i+j)^2.
\end{equation}
Thus by (\ref{int eigen bound on variations}), for all $k\in\N$,
\begin{equation}\label{eq: ultimate bound for oscillations}
    \sum_{\iota=1}^k \lVert\underline{\delta}(\beta\Phi_{\Lambda_\iota}) \rVert_2^2
    \leq
    \sum_{\iota=1}^\infty \lVert\underline{\delta}(\beta\Phi_{\Lambda_\iota}) \rVert_2^2
    =
    \sum_{i=1}^\infty\sum_{j=0}^\infty 8\beta^2 J(i+j)^2<\infty.
\end{equation}
Since $\Phi$ is a pair interaction, the first Griffiths inequality implies that for every $\iota\in\N$, 
$\int_X-\Phi_{\Lambda_\iota}d\nu^{(\iota-1)}\geq 0$ and $\int_X-\Phi_{\Lambda_\iota}d\mu\geq 0$.
Hence, by the Griffiths inequalities, one has
\begin{eqnarray}
    0\leq \beta\sum_{\iota=1}^k\int_X -\Phi_{\Lambda_\iota}d\nu^{(\iota-1)}
    &\leq&\notag
    \beta\sum_{\iota=1}^k\int_X -\Phi_{\Lambda_\iota}d\mu\\
    &\leq&\notag
    \beta\sum_{\iota=1}^\infty\int_X -\Phi_{\Lambda_\iota}d\mu\\
    &=&\notag
    \sum_{i=1}^\infty\sum_{j=0}^\infty \beta J(i+j)\mu(\sigma_{-i}\sigma_j)\\
    &\overset{(\ref{bounding integrals of W_k using GKS})}{=}&\notag
    \sum_{p=1}^\infty \beta J(p)\mu(\sigma_0\sigma_p) \cdot \# U_p\\
    &=&\notag
    \sum_{p=1}^\infty \beta p J(p) \mu(\sigma_0\sigma_p)\\
    &<&
    \beta \cdot \sup_{p\in\N} p J(p) \cdot \chi_\beta(\Phi).
\end{eqnarray}
Hence, by (\ref{eq: ultimate bound for oscillations}) and (\ref{eq: bounding exp term in valle poussin}), one concludes the desired claim. 

\subsection{The third part}
With Gaussian concentration bounds for $\nu^{(0)}$ now established across the entire interval  $\beta\in [0,\beta_c(\Phi))$, the proof of the continuity of the eigenfunctions replicates the argument previously used under the Dobrushin condition -- specifically, that of the third part of Theorem C in \cite{EFMV2024}. 
We note that the key contribution of this part of the paper is the establishment of the GCB for the unique Gibbs measure associated with the interaction $\Psi^{(0)}$, 
%with a uniform GCB constant
-- a result that was previously known only in the Dobrushin uniqueness region, which notably excludes the near-critical values of $\beta$.
For the reader’s convenience, we briefly summarise the main steps below. 
We adopt the notations from the proof of Theorem C in \cite{EFMV2024} and omit their definitions except where necessary for clarity.

The proof is based on the Arzela-Ascoli theorem. 
In the presentation below, $\xi$ stands for the configurations in the left half space $X_-:=\{\pm1\}^{-\N}$ and $\eta$ stands for the configurations in the right half space $X_+=\{\pm1\}^{\Z_+}$.
For simplicity of the presentation, we consider the subfamily $\{\Psi^{(k_N)}:N\in\N\}$ of the intermediate interactions, here $k_N$ is defined in (\ref{indexation rule of mathcal{A}}).
Then for the Radon-Nikodym density $f^{[N]}$ of the unique Gibbs measure $\nu^{[N]}\in\G(\Psi^{(k_N)})$ with respect to $\nu^{(0)}$, one has 
\begin{equation}
    f^{[N]}=\frac{e^{-W_{[N]}}}{\int_{X}e^{-W_{[N]}}d\nu^{(0)}},\;\;\;\; W_{[N]}(\xi,\eta):=\sum_{i=1}^N\sum_{j=0}^N-\beta J(i+j)\xi_{-i}\eta_j.
\end{equation}
In particular, the density of the restriction $\nu^{[N]}|_{X_+}$ of $\nu^{[N]}$  to $X_+$ with respect to $\nu$ is given by 
\begin{equation}
    f_+^{[N]}(\eta)
    =
    \int_{X_-} f^{[N]}(\xi,\eta) \nu_-(d\xi)
    =
    \frac{\int_{X_-} e^{-W_{[N]}(\xi, \eta)}\nu_-(d\xi)}{\int_{X_+}\int_{X_-} e^{-W_{[N]}(\xi, \zeta)}\nu_-(d\xi)\nu(d\zeta)}.
\end{equation}
We argue that it suffices to show the relative compactness of $\{f_+^{[N]}: N\in\N\}$ in $C(X_+)$. 
Indeed, if that is the case, then there exists a subsequence $\{f_+^{[N_s]}\}_{s\in\N}$ converging to some $f_+\in C(X_+)$ in the uniform topology. 
Then, by the argument in the proof of Theorem D in \cite{EFMV2024}, $f_+$ is the Radon-Nikodym density of the restriction $\mu|_{X_+}$ of $\mu$ to $X_+$ with respect to $\nu$, i.e., $f_+=\frac{d\mu|_{X_+}}{d\nu}$.
In particular, by Proposition 3.1 in \cite{EFMV2024}, $f_+$ is the eigenfunction of the transfer operator $\La_\phi$, i.e., $\La_\phi f_+(\eta)=\lambda f_+(\eta)$ for $\nu-$almost every $\eta\in X_+$.
Thus, since $\nu$ is fully supported, one has $\La_\phi f_+(\eta)=\lambda f_+(\eta)$ for all $\eta\in X_+$.

Below, we aim to prove the relative compactness of the family $\{f_+^{[N]}: N\in\N\}\subset C(X_+)$.
Then, by the Arzela-Ascoli theorem, it suffices to show the uniform boundedness and the equicontinuity of the family.

\noindent
\underline{\textit{Uniform boundedness:}}
As $\nu^{(0)}=\nu_-\times \nu$, $\nu_-$ inherits the GCB property from $\nu^{(0)}$.
Note that the interaction $\Phi|_{X_-}=(\Phi_{\Lambda})_{\Lambda\Subset -\N}$ is invariant under a global spin-flip transformation, and so is the unique Gibbs measure $\nu_-\in \G(\Phi|_{X_-})$. 
Hence for all $N\in\N$ and $\eta\in X_+$, $\int_{X_-}W_{[N]}(\xi,\eta)\nu_-(d\xi)=0$. Thus, by the GCB, one has for every $\kappa\in\R$ that 
\begin{equation}\label{GCB to nu_-}
    \int_{X_-} e^{\kappa W_{[N]}(\xi,\eta)}\nu_-(d\xi)
    \leq e^{D \kappa^2 ||\underline \delta (W_{[N]}(\cdot,\eta))||_2^2}.
\end{equation}
Note that for all $i\in\N$, 
$$
\delta_{-i}(W_{[N]}(\cdot,\eta))
=
2\beta \Big| \sum_{j=0}^N J(i+j)\eta_j\Big|
\leq 
2\beta \sum_{j=0}^N J(i+j)
\leq
2\beta \sum_{j=i}^\infty J(j),
$$
hence
\begin{equation}\label{eq:r-variance}
    ||\underline \delta (W_{[N]}(\cdot,\eta))||_2^2\leq 4\beta^2 \sum_{i=1}^{\infty}\Big(\sum_{j=i}^{\infty} J(j) \Big)^2=:4\beta^2 C_1.
\end{equation}
By combining (\ref{eq:r-variance}) and (\ref{GCB to nu_-}), one obtains for every $N\in\N$, $\eta\in X_+$ and $\kappa\in\R$ that
\begin{equation}\label{UB for num of f^N_+}
    \int_{X_-} e^{\kappa W_{[N]}(\xi,\eta)}\nu_-(d\xi)\leq e^{4 D \kappa^2 \beta^2 C_1}. 
\end{equation}
Changing $\kappa\to-\kappa$ and applying the Cauchy-Schwarz inequality, one can also obtain the lower bound for the integral, i.e., 
\begin{equation}\label{LB for num of f^N_+}
    e^{-4 D \kappa^2 \beta^2 C_1}\leq \int_{X_-} e^{\kappa W_{[N]}(\xi,\eta)}\nu_-(d\xi).
\end{equation}
By combining (\ref{LB for num of f^N_+}) with (\ref{UB for num of f^N_+}), one concludes that
\begin{equation}\label{bounds for denom of f^N and f^N_+}
    e^{-4 D \beta^2 C_1}
    \leq \int_{X_+}\int_{X_-} e^{-W_{[N]}(\xi,\eta)}\nu_-(d\xi)\nu(d\eta)
    \leq e^{4 D \beta^2 C_1} 
\end{equation}
and also for every $\eta\in X_+$ that
\begin{equation}\label{uniform ub lb boundedness of half-line densities}
    e^{-8 D \beta^2 C_1}
    \leq f^{[N]}_+(\eta)
    \leq e^{8 D \beta^2 C_1}.
\end{equation}

\noindent
\underline{\textit{Equicontinuity:}}
As the denominator $\int_X e^{-W_{[N]}}d\nu^{(0)}$ is uniformly bounded as shown in (\ref{bounds for denom of f^N and f^N_+}), it suffices to show that the family $\Big\{\int_{X_-}e^{-W_{[N]}(\xi,\cdot)}\nu_-(d\xi): N\in\N\Big\}$  is equicontinuous. 
Consider $n\in\N$ and configurations $\eta, \tilde \eta\in X_+$ such that $\eta_0^{n-1}=\tilde\eta _0^{n-1}$.
Then, by the Cauchy-Schwarz inequality,
\begin{multline}
    \Big|\int_{X_-}\Big[e^{-W_{[N]}(\xi,\eta)}-e^{-W_{[N]}(\xi,\tilde\eta)} \Big]\nu_-(d\xi)\Big|\\
    \leq \Big(\int_{X_-}e^{-2W_{[N]}(\xi,\eta)}\nu_-(d\xi) \Big)^{\frac{1}{2}} \Big(\int_{X_-}\Big[ e^{W_{[N]}(\xi,\eta)-W_{[N]}(\xi,\tilde\eta)}-1 \Big]^2 \nu_-(d\xi)\Big)^{\frac{1}{2}}
\end{multline}
and (\ref{UB for num of f^N_+}) yields that
\begin{equation}\label{equicontinuity important bound}
    \Big|\int_{X_-}\Big[e^{-W_{[N]}(\xi,\eta)}-e^{-W_{[N]}(\xi,\tilde\eta)} \Big]\nu_-(d\xi)\Big|
    \leq 
    C_2 \Big(\int_{X_-}\Big[ e^{W_{[N]}(\xi,\eta)-W_{[N]}(\xi,\tilde\eta)}-1 \Big]^2\nu_-(d\xi) \Big)^{\frac{1}{2}}
\end{equation}
with $C_2:= e^{8 D  \beta^2 C_1}$.  
We will bound the last integral by bounding the exponent.
\smallskip

Note that
for all $i\in\N$,
\begin{equation}
    \delta_{-i}(W_{[N]}(\cdot,\eta)-W_{[N]}(\cdot,\tilde\eta))
    =
    2\beta\Big|\sum_{j=n}^N J(i+j) (\eta_j-\tilde\eta_j) \Big|
    \leq
    4\beta \sum_{j=n}^N J(i+j)\;.
\end{equation}
Hence, for sufficiently large $n$ and $N>n$,
\begin{equation*}\label{oscillation estimate for the difference of W_N's}
    ||\underline\delta(W_{[N]}(\cdot,\eta)-W_{[N]}(\cdot,\tilde\eta))||_2^2
    \leq 16\beta^2\sum_{i=1}^{\infty}\Big(\sum_{j=n}^N J(i+j)\Big)^2
    \leq 
    16 \beta^2 \sum_{i=1}^{\infty}
    \Big(\sum_{j=n+i}^\infty J(j) \Big)^2
    =:u_n
\end{equation*}
with 
$
\lim\limits_{n\to\infty}u_n=0.
$
The spin-flip invariance of $\nu_-$ implies that for all $N\in\N$,
\[
\int_{X_-} [W_{[N]}(\xi,\eta)-W_{[N]}(\xi,\tilde\eta)]\,\nu_-(d\xi)=0\;.
\]
Then, since $\nu_-$ satisfies the GCB, we have, from the moment concentration bounds (\ref{eq: MCB}), that for all $m\in\N$,
\begin{eqnarray}\label{apply MCB to difference of W_N}
    \int_{X_-}\Big|W_{[N]}(\xi,\eta)-W_{[N]}(\xi,\tilde\eta) \Big|^m\nu_-(d\xi)
   & \leq& \Big(\frac{D  ||\underline\delta(W_{[N]}(\cdot,\eta)-W_{[N]}(\cdot,\tilde\eta))||_2^2   }{2} \Big)^{\frac{m}{2}}m\,\Gamma\Big (\frac{m}{2}\Big) \nonumber\\
   &=&  m\, v_n^m\, \Gamma \Big(\frac{m}{2}\Big)
\end{eqnarray}
with
\[
v_n:=\Big(\frac{D\,u_n}{2} \Big)^{\frac{1}{2}}\;.
\]
To conclude, we expand the square on the right-hand side of (\ref{equicontinuity important bound}):
\begin{eqnarray}
 \lefteqn{\int_{X_-}\Big[ e^{W_{[N]}(\xi,\eta)-W_{[N]}(\xi,\tilde\eta)}-1 \Big]^2 \nu_-(d\xi)}\nonumber\\
 &=& \int_{X_-}\Big[ e^{2\left[W_{[N]}(\xi,\eta)-W_{[N]}(\xi,\tilde\eta)\right]}- 2\, e^{W_{[N]}(\xi,\eta)-W_{[N]}(\xi,\tilde\eta)} 
  +1 \Big]\nu_-(d\xi)\nonumber\\
 &\le& 
 \sum_{m=1}^{\infty} \frac{2^m+2}{m!} \int_{X_-}\Bigl|W_{[N]}(\xi,\eta)-W_{[N]}(\xi,\tilde\eta) \Bigr|^m\nu_-(d\xi).
\end{eqnarray}
Using elementary analysis, one can demonstrate that
\begin{equation}
    \sum_{m=1}^{\infty}\frac{2^m+2}{m!} m \,v_n^m\, \Gamma \Big(\frac{m}{2}\Big)
    \leq 
    (6v_n^2+8v_n)e^{v_n^2}.
\end{equation}
Therefore, we obtain from (\ref{equicontinuity important bound}) and \eqref{apply MCB to difference of W_N}, 
\begin{equation}
    \Big|\int_{X_-}\Big[e^{-W_{[N]}(\xi,\eta)}-e^{-W_{[N]}(\xi,\tilde\eta)} \Big]\nu_-(d\xi)\Big|
    \leq
    C_2 
    ((6v_n^2+8v_n)e^{v_n^2})^{\frac{1}{2}}.
\end{equation}
Since $\lim\limits_{n\to\infty}v_n=0$, we conclude the claim.

\section*{Appendix: GCB from LSI in the Gibbsian setting}
We call an interaction $\Psi=(\Psi_\Lambda)_{\Lambda\Subset\Z}$ on $X=\{\pm 1\}^\Z$ \textit{strongly uniformly absolutely convergent} (SUAC) if 
\begin{equation}
    \lVert\Psi\rVert := \sup_{i\in\Z}\sum_{\substack{V\ni i\\ V\Subset\Z}} \lVert \Psi_V\rVert_\infty<\infty.
\end{equation}

The goal of the appendix is to show the following statement.

\begin{manualprop}{A.1}\label{prop: GCB from LSI}
    Assume $\mu$ is a unique Gibbs measure for a SUAC interaction $\Psi$.
    If $\mu$ satisfies the log Sobolev inequality with a constant $D>0$, then $\mu$ satisfies the Gaussian concentration bound with the constant $\frac{D(e^{2\lVert \Psi\rVert}+1)}{2}$.
\end{manualprop}

Note that for $i\in\Z$, the spin-flip transformation $\s_i:X\to X$ at $i$ is defined as $\s_i(\omega)=\omega^{(i)}$, $\omega\in X$.
We use the following lemma in the proof of Proposition \ref{prop: GCB from LSI}.

\begin{manuallem}{A.2}\label{imp lem for GCB from LSI}
    For every $i\in\Z$, the pushforward $\mu^{(i)}:=(\s_i)_*(\mu)=\mu\circ (\s_i)^{-1}$ of $\mu$ by $\s_i$ is absolutely continuous with respect to $\mu$ and $\frac{d\mu^{(i)}}{d\mu}\in L^{\infty}(\mu)$. 
    Furthermore, 
    \begin{equation}\label{uni bound for densities of flipping}
        \sup_{i\in\Z} \Big \lVert \frac{d\mu^{(i)}}{d\mu}\Big\rVert_{L^{\infty}(\mu)}\leq e^{2\lVert\Psi\rVert}. 
    \end{equation}
\end{manuallem}
\begin{proof}[Proof of Lemma \ref{imp lem for GCB from LSI}]
    Fix $i\in\Z$.  
    We consider finite volumes $\Lambda\ni i$ and corresponding the finite-volume Gibbs measures for $\Psi$ with the $+$ boundary condition:
    \begin{equation}\label{Appendix: Boltzmann ansatz}
        \mu_\Lambda([\omega_\Lambda])=\frac{e^{-H_\Lambda(\omega_\Lambda+_{\Lambda^c})}}{Z_\Lambda(+)},\;\; \omega_\Lambda\in \{\pm 1\}^\Lambda,
    \end{equation}
where $H_\Lambda:=\sum_{V\cap\Lambda\neq \emptyset}\Psi_V$ is the Hamiltonian in $\Lambda$ and $Z_\Lambda(+):=\sum_{\omega_\Lambda\in E^\Lambda}e^{-H_\Lambda(\omega_\Lambda+_{\Lambda^c})}$ is the corresponding partition function.
By the uniqueness, the finite-volume Gibbs measures $\mu_\Lambda$ converge to $\mu$ as $\Lambda\uparrow\Z$ in the weak star topology. 
Thus, by the continuity of the pushforward $(\s_i)_*$, one has $\mu_\Lambda^{(i)}\to\mu^{(i)}$, where $\mu_\Lambda^{(i)}:=\mu_\Lambda\circ (\s_i)^{-1}$.
Note that $\mu_\Lambda^{(i)}\ll \mu_\Lambda$ and for every $\omega_\Lambda\in E^{\Lambda}$, $\mu_\Lambda^{(i)}([\omega_\Lambda])=\mu_\Lambda([\omega^{(i)}_\Lambda])$.
Thus by (\ref{Appendix: Boltzmann ansatz}),
\begin{equation}\label{appendix: RN densities of finite volume measures}
    h^{(i)}_\Lambda(\omega)
    :=
    \frac{d\mu_\Lambda^{(i)}}{d\mu_\Lambda}(\omega)
    =
    \frac{\mu_\Lambda^{(i)}([\omega_\Lambda])}{\mu_\Lambda([\omega_\Lambda])}
    =
    e^{H_\Lambda(\omega_\Lambda+_{\Lambda^c})-H_\Lambda(\omega^{(i)}_\Lambda+_{\Lambda^c})}.
\end{equation}
Note that 
\begin{eqnarray}
    H_\Lambda(\omega_\Lambda+_{\Lambda^c})-H_{\Lambda}(\omega^{(i)}_\Lambda+_{\Lambda^c})
    &=&\notag
    \sum_{\substack{V\ni i}} [\Psi_V(\omega_\Lambda+_{\Lambda^c})-\Psi_V(\omega^{(i)}_\Lambda+_{\Lambda^c})]\\
    &=&\label{eq for Hamiltonians in Lambda and {i}}
    H_{\{i\}}(\omega_\Lambda+_{\Lambda^c})
    -
    H_{\{i\}}(\omega^{(i)}_\Lambda+_{\Lambda^c})
\end{eqnarray}
and by the continuity of $H_{\{i\}}$, the right-hand side of (\ref{eq for Hamiltonians in Lambda and {i}}) converges uniformly in $\omega\in X$ to $H_{\{i\}}(\omega)-H_{\{i\}}(\omega^{(i)})$ as $\Lambda\uparrow\Z$.
Therefore, it follows from (\ref{appendix: RN densities of finite volume measures}) that the finite-volume Radon-Nikodym densities $h_\Lambda^{(i)}(\omega)$ converge to 
\begin{equation}\label{eq: density h^{(i)}}
    h^{(i)}(\omega):=e^{H_{\{i\}}(\omega)-H_{\{i\}}(\omega^{(i)})}
\end{equation}
uniformly in $\omega\in X$ as $\Lambda\uparrow\Z$.
Since $\mu_\Lambda^{(i)}\overset{*}{\rightharpoonup}\mu^{(i)}$ as $\Lambda\uparrow\Z$, for any test function $f\in C(X)$,
\begin{equation}\label{eq: conv of mu_Lambda^i to mu^i}
    \lim_{\Lambda\uparrow\Z} \int_X fd\mu_\Lambda^{(i)}=\int_X f d\mu^{(i)}.
\end{equation}
On the other hand, 
\begin{equation}
\int_X f d\mu_\Lambda^{(i)}
    =
    \int_X f h_\Lambda^{(i)}d\mu_\Lambda
    =
    \int_X f h^{(i)} d\mu_\Lambda
    +
    \int_X f(h_\Lambda^{(i)}-h^{(i)})d\mu_\Lambda.    
\end{equation}
Then since $\mu_\Lambda\overset{*}{\rightharpoonup}\mu$ as $\Lambda\uparrow\Z$ and $\lVert h_\Lambda^{(i)} -h^{(i)} \rVert_\infty\xrightarrow[\Lambda\uparrow\Z]{}0 $,
one obtains
\begin{equation}
    \lim_{\Lambda\uparrow\Z}\int_X fd\mu_\Lambda^{(i)}
    =
    \int_X fh^{(i)} d\mu.
\end{equation}
Then by combining this with (\ref{eq: conv of mu_Lambda^i to mu^i}), one concludes that $\mu^{(i)}\ll \mu$ and 
\begin{equation}
    \frac{d\mu^{(i)}}{d\mu}=h^{(i)}.
\end{equation}
One can readily check from (\ref{eq: density h^{(i)}}) that for every $i\in\Z$, $\lVert h^{(i)}\rVert_\infty\leq e^{2\sum_{V\ni i} \lVert \Psi_V\rVert_\infty}\leq e^{2\lVert\Psi \rVert}$, and hence one concludes the last claim of the lemma.
    
\end{proof}

In the proof of Proposition \ref{prop: GCB from LSI}, we also use the following elementary lemma, which easily follows from the mean value theorem.
\begin{manuallem}{A.3}\label{elment lemma-mvt}
    For all $a,b\in\R$, one has $\lvert e^b-e^a\rvert\leq e^{\max\{a,b\}}|b-a|$.
\end{manuallem}

\begin{proof}[Proof of Proposition \ref{prop: GCB from LSI}]
Fix a local function $f:X\to\R$ and a number $\lambda\in [0,1]$. 
Consider $e^{\lambda f}$, and denote $\z(\lambda):=\int_X e^{\lambda f}d\mu$.
Note that $\z\in C^1([0,1])$ and for all $\lambda\in [0,1]$, $\z'(\lambda)=\int_X f e^{\lambda f}d\mu$.
By applying the log Sobolev inequality to $e^{\lambda f/2}$, one gets
\begin{equation}\label{LSI applied to e^{lambda f/2}}
    \lambda \int_X f e^{\lambda f} d\mu
    -
    \int_X e^{\lambda f} d\mu \cdot \log \Big( \int_X e^{\lambda f} d\mu\Big)
    \leq 
    2D\int_X \sum_{i\in\Z} \Big\lvert e^{\lambda f(\omega)/2}-e^{\lambda f(\omega^{(i)})/2} \Big\rvert^2 \mu(d\omega).
\end{equation}
By rewriting (\ref{LSI applied to e^{lambda f/2}}) in terms of $\mathcal{Z}$ and by employing Lemma \ref{imp lem for GCB from LSI} and Lemma \ref{elment lemma-mvt}, 
\begin{eqnarray}
    \lambda\z'(\lambda)-\z(\lambda)\log\z(\lambda)
    &\leq&\notag
    2D\int_X \sum_{i\in\Z} \Big\lvert e^{\lambda f(\omega)/2}-e^{\lambda f(\omega^{(i)})/2} \Big\rvert^2 \mu(d\omega)\\
    &\overset{\text{Lemma } \ref{elment lemma-mvt}}{\leq}&\notag
    2D\int_X \sum_{i\in\Z} e^{\lambda \max\{f(\omega), f(\omega^{(i)})\}} \frac{\lambda^2}{4} (\delta_if)^2 \mu(d\omega)\\
    &\leq&\notag
    \frac{D\lambda^2}{2} \sum_{i\in\Z} \Big[\int_X \Big( e^{\lambda f} + e^{\lambda f\circ \s_i}\Big) d\mu \cdot (\delta_i f)^2\Big]\\
    &\leq&\notag
    \frac{D\lambda^2}{2} \sum_{i\in\Z}
    \Big[
    \z(\lambda)
    +
    \int_X e^{\lambda f}d\mu^{(i)} 
    \Big]\cdot(\delta_i f)^2\\
    &\overset{\text{Lemma } \ref{imp lem for GCB from LSI}}{=}&\notag
    \frac{D\lambda^2}{2} \sum_{i\in\Z}
    \Big[
    \z(\lambda)
    +
    \int_X e^{\lambda f} h^{(i)}d\mu 
    \Big]\cdot(\delta_i f)^2\\
    &\overset{(\ref{uni bound for densities of flipping})}{\leq}&\notag
    \frac{D\lambda^2}{2} \sum_{i\in\Z}
    \Big[
    \z(\lambda)
    +
    e^{2\lVert\Psi\rVert} \z(\lambda)
    \Big]\cdot(\delta_i f)^2\\
    &=& \label{imp ineq for herbst arg}
    \frac{D(e^{2\lVert \Psi\rVert}+1)}{2}\cdot \lVert \underline{\delta}(f)\rVert_2^2\cdot \z(\lambda) \cdot \lambda^2.
\end{eqnarray}
Now consider a function $u$ on $[0,1]$ defined by $u(\lambda)=\frac{1}{\lambda}\log \z(\lambda)$ for  $\lambda\in (0,1]$, and $u(0)=\z'(0)=\int_X f d\mu$.
Note that $u\in C^1([0,1])$.
One can easily check that (\ref{imp ineq for herbst arg}) can be written in terms of $u$ as
\begin{equation}
    u'(\lambda)
    \leq
    \frac{D(e^{2\lVert \Psi\rVert}+1)}{2}\lVert \underline{\delta}(f)\rVert_2^2, \;\; \lambda\in [0,1].
\end{equation}
Hence 
\begin{equation*}
    u(1)-u(0)
    \leq
    \frac{D(e^{2\lVert \Psi\rVert}+1)}{2}\lVert \underline{\delta}(f)\rVert_2^2,
\end{equation*}
which implies the Gaussian concentration bounds since 
\begin{equation}
    u(1)-u(0)=\log \z(1)-\mu(f)=\log \Big(\int_X e^{f-\mu(f)}d\mu \Big)\leq \frac{D(e^{2\lVert \Psi\rVert}+1)}{2}\lVert \underline{\delta}(f)\rVert_2^2.
\end{equation}

\end{proof}

\section*{Acknowledgements}

The author is grateful to Aernout van Enter, Roberto Fernández, and Evgeny Verbitskiy for earlier collaborations that provided important insights relevant to this work. He also thanks Jean-René Chazottes and Evgeny Verbitskiy for valuable correspondence. Special appreciation is extended to Aernout van Enter for a careful reading of an early draft and for insightful suggestions that improved the presentation of the paper.

\begin{bibdiv}
\begin{biblist}
\bib{A2021}{article}{
   author={Aoun, Yacine},
   title={Sharp asymptotics of correlation functions in the subcritical
   long-range random-cluster and Potts models},
   journal={Electron. Commun. Probab.},
   volume={26},
   date={2021},
   pages={Paper No. 22, 9},
   % review={\MR{4248721}},
   % doi={10.1214/21-ECP390},
}

\bib{BD2024}{article}{
   author={Bauerschmidt, Roland},
   author={Dagallier, Benoit},
   title={Log-Sobolev inequality for near critical Ising models},
   journal={Comm. Pure Appl. Math.},
   volume={77},
   date={2024},
   number={4},
   pages={2568--2576},
   issn={0010-3640},
   review={\MR{4705299}},
   doi={10.1002/cpa.22172},
}

\bib{CMRU2020}{article}{
   author={Chazottes, J.-R.},
   author={Moles, J.},
   author={Redig, F.},
   author={Ugalde, E.},
   title={Gaussian concentration and uniqueness of equilibrium states in
   lattice systems},
   journal={J. Stat. Phys.},
   volume={181},
   date={2020},
   number={6},
   pages={2131--2149},
   issn={0022-4715},
}
\bib{CLS2020}{article}{
   author={Cioletti, Leandro},
   author={Lopes, Artur O.},
   author={Stadlbauer, Manuel},
   title={Ruelle Operator for Continuous Potentials and DLR-Gibbs Measures},
   journal={Discrete and Continuous Dynamical Systems - Series A},
   volume={40},
   number={8},
   date={2020},
   pages={4625--4652},
}

\bib{DSS2023}{article}{
   author={Ding, Jian},
   author={Song, Jian},
   author={Sun, Rongfeng},
   title={A new correlation inequality for Ising models with external
   fields},
   journal={Probab. Theory Related Fields},
   volume={186},
   date={2023},
   number={1-2},
   pages={477--492},
   issn={0178-8051},
}
\bib{D-CT2016}{article}{
   author={Duminil-Copin, Hugo},
   author={Tassion, Vincent},
   title={A new proof of the sharpness of the phase transition for Bernoulli
   percolation and the Ising model},
   journal={Comm. Math. Phys.},
   volume={343},
   date={2016},
   number={2},
   pages={725--745},
   issn={0010-3616},
}

\bib{Dyson1969}{article}{
   author={Freeman, Dyson},
   title={Existence of a Phase-Transition in a One-Dimensional Ising ferromagnet},
   journal={Commun. Math. Phys.},
   volume={12},
   date={1969},
   pages={91--107},
}
\bib{EFMV2024}{article}{
   author={van Enter, Aernout C. D.},
   author={Fern\'{a}ndez, Roberto},
   author={Makhmudov, Mirmukhsin},
   author={Verbitskiy, Evgeny},
   title={On an extension of a theorem by Ruelle to long-range potentials},
   journal={arXiv:2404.07326v1},
   date={2024},
}

\bib{FV-book}{book}{
   author={Friedli, S.},
   author={Velenik, Y.},
   title={Statistical mechanics of lattice systems},
   note={A concrete mathematical introduction},
   publisher={Cambridge University Press, Cambridge},
   date={2018},
   pages={xix+622},
   isbn={978-1-107-18482-4},
   review={\MR{3752129}},
}
\bib{FS1982}{article}{
   author={Jurg, Frohlich},
   author={Thomas, Spencer},
   title={The Phase Transition in the One-Dimensional Ising Model with $1/r^2$ Interaction Energy},
   journal={Commun. Math. Phys.},
   volume={84},
   date={1982},
   pages={87--101},
}
\bib{Georgii-book}{book}{
   author={Georgii, Hans-Otto},
   title={Gibbs measures and phase transitions},
   series={De Gruyter Studies in Mathematics},
   volume={9},
   publisher={Walter de Gruyter \& Co., Berlin},
   date={1988},
   pages={xiv+525},
   isbn={0-89925-462-4},
   review={\MR{956646}},
   doi={10.1515/9783110850147},
}
\bib{Griffiths1966}{article}{
   author={Griffiths, R.B.},
   title={Correlations in Ising Ferromagnets. I},
   journal={J. Math. Phys.},
   volume={8},
   date={1966},
   pages={478–-483},
   issn={0022-2488},
   doi={10.1063/1.1705219},
}
\bib{Griffiths1968}{article}{
   author={Griffiths, R.B.},
   title={Rigorous Results for Ising Ferromagnets of Arbitrary Spin},
   journal={J. Math. Phys.},
   volume={10},
   date={1968},
   pages={1559–1565},
   issn={0022-2488},
   doi={10.1063/1.1665005},
}

\bib{JOP2019}{article}{
   author={Johansson, Anders},
   author={\"{O}berg, Anders},
   author={Pollicott, Mark},
   title={Phase transitions in long-range Ising models and an optimal
   condition for factors of $g$-measures},
   journal={Ergodic Theory Dynam. Systems},
   volume={39},
   date={2019},
   number={5},
   pages={1317--1330},
   issn={0143-3857},
}
\bib{JOP2025}{article}{
   author={Johansson, Anders},
   author={\"{O}berg, Anders},
   author={Pollicott, Mark},
   title={Continuous eigenfunctions of the transfer operator for Dyson models.},
   journal={Math. Z.},
   volume={310},
   date={2025},
   number={62},
   pages={1--15},
   issn={0143-3857},
}

\bib{M2025}{article}{
   author={Makhmudov, Mirmukhsin},
   title={The Eigenfunctions of the Transfer Operator for the Dyson Model in
   a Field},
   journal={J. Stat. Phys.},
   volume={192},
   date={2025},
   number={7},
   pages={Paper No. 92},
   issn={0022-4715},
   review={\MR{4927941}},
   doi={10.1007/s10955-025-03476-z},
}

\bib{Moles-PhD-thesis}{thesis}{
  author={Moles, Jordan},
  title={On concentration inequalities for equilibrium states in lattice and symbolic dynamical systems},
  type={Ph.D. Thesis},
  school={Institut Polytechnique de Paris; Universidad Autónoma de San Luis Potosí},
  number={2020IPPAX102},
  url={https://theses.hal.science/tel-03152537},
}    

\bib{SZ1999}{article}{
   author={Spohn, Herbert},
   author={Zwerger, Wilhelm},
   title={Decay of the two-point function in one-dimensional ${\rm O}(N)$
   spin models with long-range interactions},
   journal={J. Statist. Phys.},
   volume={94},
   date={1999},
   number={5-6},
   pages={1037--1043},
   issn={0022-4715},
}

\bib{V-P1915}{article}{
   author={De La Vall\'{e}e Poussin, Charles},
   title={Sur l'int\'{e}grale de Lebesgue},
   language={French},
   journal={Trans. Amer. Math. Soc.},
   volume={16},
   date={1915},
   number={4},
   pages={435--501},
   issn={0002-9947},
   review={\MR{1501024}},
   doi={10.2307/1988879},
}
\bib{V2018}{book}{
   author={Vershynin, Roman},
   title={High-dimensional probability},
   series={Cambridge Series in Statistical and Probabilistic Mathematics},
   volume={47},
   note={An introduction with applications in data science;
   With a foreword by Sara van de Geer},
   publisher={Cambridge University Press, Cambridge},
   date={2018},
   pages={xiv+284},
   isbn={978-1-108-41519-4},
   review={\MR{3837109}},
   doi={10.1017/9781108231596},
}

\end{biblist}
\end{bibdiv}
\end{document}